\documentclass[12pt]{amsart}

\usepackage{xcolor}
\usepackage[vcentermath,enableskew]{youngtab}
\usepackage{amssymb}
\usepackage[all]{xy}

\usepackage{cite}

\newcommand{\Flags}{\mathrm{Fl}}
\newcommand{\Grass}{\mathrm{Gr}}
\newcommand{\OFlags}{\mathrm{FlO}}
\newcommand{\OGrass}{\mathrm{GrO}}
\newcommand{\SFlags}{\mathrm{FlS}}
\newcommand{\SGrass}{\mathrm{GrS}}

\newcommand{\Pic}{\mathrm{Pic}}

\setlength{\headheight}{15pt}
\setlength{\oddsidemargin}{-.3cm}
\setlength{\evensidemargin}{-.3cm}
\setlength{\textwidth}{16cm}
\setlength{\textheight}{20cm}

\newtheorem{theorem}{Theorem}[section]
\newtheorem{lemma}[theorem]{Lemma}
\newtheorem{proposition}[theorem]{Proposition}

\theoremstyle{definition}
\newtheorem{example}[theorem]{Example}

\newtheorem{definition}[theorem]{Definition}
\newtheorem{remark}[theorem]{Remark}

\newcommand{\GL}{\mathrm{GL}}

\newcommand{\F}{\mathcal{F}}

\usepackage{hyperref}

\begin{document}

\title{On isomorphisms of ind-varieties of generalized flags}
\dedicatory{To the memory of Yuri Manin, an extraordinary human, mathematician, and mentor}
\author{Lucas Fresse}
\address[L. F.]{Universit\'e de Lorraine, CNRS, Institut \'Elie Cartan de Lorraine, UMR 7502, Vandoeu\-vre-l\`es-Nancy, F-54506, France}
\email{lucas.fresse@univ-lorraine.fr}
\author{Ivan Penkov}\thanks{The work of I.\,P. has been partially supported by DFG Grant PE 980/9-1}
\address[I. P.]{Constructor University, 28759 Bremen, Germany}
\email{ipenkov@constructor.university}

\begin{abstract}
Ind-varieties of generalized flags have been studied for two decades. However, a precise statement of when two such ind-varieties, one or both being possibly  ind-varieties of isotropic generalized flags, are isomorphic, has been missing in the literature. Using some recent results on the structure of ind-varieties of generalized flags, we establish a criterion for the existence of an isomorphism as above. Our result claims that, with only two exceptions, isomorphisms of ind-varieties of generalized flags are induced by isomorphisms of respective generalized flags. The exceptional isomorphisms correlate with a well-known result of A.~Onishchik from 1963.
\end{abstract}

\keywords{Flag variety; generalized flag; ind-variety; automorphism}
\subjclass[2010]{14M15; 14L30}

\maketitle

\setcounter{tocdepth}{1}


\section{Introduction}

If $X$ and $Y$ are two finite-dimensional flag varieties, possibly one or both being varieties of isotropic flags, the problem of whether $X$ and $Y$ are isomorphic is easily solvable.
One can approach it in different ways, one of which is to look at the automorphism  groups of $X$ and $Y$. This yields an elegant proof of the following theorem, for whose statement we need to introduce some notation.
If $X=\mathrm{Fl}(a_1,\ldots,a_i,V)$ is the variety of flags with dimension sequence $(a_1,\ldots,a_i,a_{i+1}=\dim V)$ in a finite-dimensional vector space $V$ with $\dim V\geq 2$, we say that $X$ is of \emph{general type}. If $X=\mathrm{FlO}(a_1,\ldots,a_i,V)$ is a connected variety
of isotropic flags with dimension sequence $(a_1,\ldots,a_i,a_{i+1}=\dim V)$ in an  orthogonal space $V$ with $\dim V\geq 5$, we say that $X$ is \emph{of orthogonal type}. If $X=\mathrm{FlS}(a_1,\ldots,a_i,V)$ is a (automatically connected) variety of isotropic flags with dimension sequence $(a_1,\ldots,a_i,a_{i+1}=\dim V)$ in a symplectic space $V$ with $\dim V\geq 6$, we say that $X$ is \emph{of symplectic type}.

\begin{theorem}
\label{th-intro}
Let $X$ and $Y$ be two flag varieties of the same type as above. Then $X$ and $Y$ are isomorphic if and only if their dimension sequences coincide, or both $X$ and $Y$ are of general type and their respective dimension sequences $(a_1,\ldots,a_i,a_{i+1})$ and $(b_1,\ldots,b_j,b_{j+1})$ satisfy $i=j$, $a_{i+1}=b_{i+1}$, and $a_k=a_{i+1}-b_k$ for $k\in\{1,\ldots,i\}$.

If $X$ and $Y$ are of different types, then the only possible isomorphisms are as follows:
\begin{itemize}
\item $\mathrm{Fl}(1,\mathbb{C}^{2n})\cong\mathrm{FlS}(1,\mathbb{C}^{2n})$;
\item $\mathrm{FlO}(n-1,\mathbb{C}^{2n-1})\cong\mathrm{FlO}(n,\mathbb{C}^{2n})$.
\end{itemize}
\end{theorem}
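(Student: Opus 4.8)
The plan is to prove the statement through the connected automorphism groups of the varieties involved, following the approach already signalled above. Each of the three types is a homogeneous space $G/P$ for a simple classical group: a variety of general type $\mathrm{Fl}(a_1,\dots,a_i,\mathbb{C}^m)$ is $\mathrm{SL}_m/P$ (type $A_{m-1}$), a variety of orthogonal type is $\mathrm{SO}_m/P$ (type $B$ or $D$), and a variety of symplectic type is $\mathrm{Sp}_{2n}/P$ (type $C_n$); in each case $P$ is encoded by the dimension sequence, equivalently by a subset of nodes of the Dynkin diagram. Two invariants do the work. First, $\mathrm{Pic}(G/P)\cong\mathbb{Z}^{i}$ is free of rank equal to the number of proper steps of the flag, so any isomorphism $X\cong Y$ forces this number to agree. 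Second, and crucially, the identity component $\mathrm{Aut}^0(X)$ is, by the theorem of Demazure on automorphisms of flag varieties, isomorphic to the adjoint group $G_{\mathrm{ad}}$ acting in the natural way, the only exceptions being those on Onishchik's list; for classical $G$ these are exactly $\mathrm{Sp}_{2n}/P_1=\mathbb{P}^{2n-1}$, where $\mathrm{Aut}^0=\mathrm{PGL}_{2n}\supsetneq\mathrm{PSp}_{2n}$, and the spinor variety $\mathrm{SO}_{2n-1}/P_{n-1}$, where $\mathrm{Aut}^0=\mathrm{PSO}_{2n}\supsetneq\mathrm{PSO}_{2n-1}$.

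For the ``if'' direction I would exhibit the isomorphisms directly. Equality of dimension sequences gives the identity case. For two varieties of general type, the duality $W\mapsto W^{\perp}$ carrying a flag in $\mathbb{C}^m$ to the flag of annihilators in $(\mathbb{C}^m)^{*}$ realizes the outer (flip) automorphism of the diagram $A_{m-1}$ and sends the dimension sequence $(a_1,\dots,a_i,a_{i+1})$ to the complementary one with entries $a_{i+1}-a_k$, which is precisely the alternative appearing in the statement. The two cross-type isomorphisms are the classical coincidences underlying the two Onishchik exceptions above: every line in a symplectic space is isotropic, so $\mathrm{FlS}(1,\mathbb{C}^{2n})=\mathbb{P}^{2n-1}=\mathrm{Fl}(1,\mathbb{C}^{2n})$; and the half-spin coincidence identifies the maximal isotropic Grassmannian $\mathrm{FlO}(n-1,\mathbb{C}^{2n-1})$ of type $B_{n-1}$ with the component $\mathrm{FlO}(n,\mathbb{C}^{2n})$ of type $D_n$.

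For the ``only if'' direction, suppose $X\cong Y$. Since the automorphism group is a functorial invariant, $\mathrm{Aut}^0(X)\cong\mathrm{Aut}^0(Y)$ as algebraic groups. Outside the two exceptional cases this reads $G_{X,\mathrm{ad}}\cong G_{Y,\mathrm{ad}}$, and comparing Dynkin type and rank forces $X$ and $Y$ to share the same type and the same ambient group, so in particular $\dim V$ agrees. An isomorphism $X\to Y$ must then intertwine the two transitive $G_{\mathrm{ad}}$-actions up to an automorphism of $G_{\mathrm{ad}}$, whence the point stabilizers $P_X$ and $P_Y$ are conjugate modulo $\mathrm{Out}(G)$. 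As conjugacy classes of parabolics correspond to subsets of simple roots, this means the marked Dynkin diagrams of $P_X$ and $P_Y$ agree up to a diagram automorphism: for types $B$ and $C$ the outer automorphism group is trivial, so the dimension sequences coincide, while in type $A$ the flip reproduces exactly the complementary-dimension condition.

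The main obstacle will be the low-rank bookkeeping, and this is precisely what the bounds $\dim V\geq 2,5,6$ are there to control. One must verify that, apart from the two listed families, no accidental isogeny among small classical groups produces a further coincidence; the delicate point is the exceptional isomorphisms of diagrams ($A_3\cong D_3$, $B_2\cong C_2$, $A_1\cong B_1\cong C_1$, $D_2\cong A_1\times A_1$) together with the Onishchik enlargements of $\mathrm{Aut}^0$, which must be shown to contribute nothing beyond the two stated exceptions in the admissible range. A second, more structural point to make rigorous is that an abstract isomorphism of varieties is automatically equivariant up to $\mathrm{Aut}(G_{\mathrm{ad}})$: this rests on recovering $G_{\mathrm{ad}}$ canonically as $\mathrm{Aut}^0(X)$ and identifying its parabolic subgroups with the point stabilizers, so that the whole classification reduces to the combinatorics of subsets of Dynkin nodes modulo diagram automorphisms.
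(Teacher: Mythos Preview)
Your approach is essentially the one the paper sketches: invoke Onishchik's (equivalently Demazure's) theorem that $\mathrm{Aut}^0(X)$ is the adjoint group with precisely the two listed exceptions, use the pairwise non-isomorphism of the classical adjoint groups to force the same type and the same ambient space, and then finish by a combinatorial comparison. The only cosmetic difference is in that last step: the paper phrases it as ``comparing the grassmannians, or isotropic grassmannians, to which $X$ and $Y$ project'', whereas you argue via conjugacy of the parabolic stabilizers modulo $\mathrm{Out}(G)$; these are two formulations of the same Dynkin-node bookkeeping. One small point to tighten: you treat types $B$ and $C$ explicitly but should also say a word about type $D$, where $\mathrm{Out}$ is nontrivial --- here the diagram flip exchanges the two connected components of the isotropic flag variety, which by the paper's convention carry the same symbol $\mathrm{FlO}$, so no new dimension sequences appear (and the $D_4$ triality coincidence should be checked against the stated bounds).
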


This theorem can be considered a corollary of Onishchik's result \cite{O}
claiming  that the connected component of unity of the automorphism group of a flag variety $X$, or  a variety of isotropic flags, is the centerless adjoint group corresponding to the variety, except when $X$ is isomorphic to $\mathrm{FlS}(1,\mathbb{C}^{2n})$
or $\mathrm{FlO}(n-1,\mathbb{C}^{2n-1})$. Indeed, since the algebraic groups $\mathrm{SL}(n)$ for $n\geq 2$, $\mathrm{SO}(m)$ for $m\geq 4$, $\mathrm{Sp}(r)$ for even $r\geq 4$, 
are pairwise non-isomorphic, Onishchik's result reduces the problem to comparing two flag varieties $X$ and $Y$ of the same type for the same vector space $V$. The proof of Theorem \ref{th-intro} gets then easily completed by comparing the grassmannians, or isotropic grassmannians, to which $X$ and $Y$ project.

In the present paper we prove an exact analogue of Theorem \ref{th-intro} for ind-varieties of, possibly isotropic, generalized flags. These ind-varieties are homogeneous ind-spaces for the groups $\GL(\infty)$, $\mathrm{SO}(\infty)$, $\mathrm{Sp}(\infty)$, and have been studied quite extensively in the last twenty years \cite{Dimitrov-Penkov,DPW,IP,HP,PT1,PT}. Nevertheless, a precise statement of when two such ind-varieties are isomorphic has been  missing in the literature. 

First, let us note that Theorem \ref{th-intro} does not imply directly any statement of isomorphism or non-isomorphism of ind-varieties of generalized flags, since two non-isomorphic ind-varieties may admit exhaustions with pairwise isomorphic finite-dimensional varieties, and conversely, an ind-variety may admit two exhaustions by pairwise non-isomorphic finite-dimensional varieties.
Next, we recall that the automorphism groups of ind-varieties of, possibly isotropic, generalized flags have been computed in \cite{IP}. However, since the question of when two such groups are isomorphic as abstract groups has not yet been addressed (and may be quite hard), we are unable to produce an argument as direct as in the outline of proof of Theorem \ref{th-intro} given above. Instead, we rely on some basic information about automorphism groups of ind-varieties of generalized flags and on a
technique developed in the papers \cite{PT1,PT}. This technique turns out to be very useful also in the problem of isomorphisms.

The precise statement of our main result is Theorem \ref{T-main} below.
In Section \ref{section-3} we have collected preliminaries on finite-dimensional flag varieties. Sections \ref{section-4} and \ref{section-5} are devoted to the proof of Theorem \ref{T-main}.


\subsection*{Acknowledgement} We thank Valdemar Tsanov for providing with the reference \cite{Landsberg-Manivel} and
explaining its relevance.

\section{Statement of result}

\label{section-2}

\subsection{Short review of ind-varieties of generalized flags}

The base field is the field of complex numbers $\mathbb{C}$. Let $V$ be a countable-dimensional vector space, possibly equipped with an orthogonal (i.e., non-degenerate, symmetric) or symplectic (i.e., non-degenerate, antisymmetric) bilinear form $\omega$. By $E$ we denote a basis of $V$. In the presence of a form $\omega$, we make the following definition.

\begin{definition}
\label{D2.1}
Assume that $V$ is equipped with an orthogonal or symplectic form $\omega$.
A basis $E$ is said to be {\em isotropic} if it is equipped with an involution
$i_E:E\to E$ with at most one fixed point, such that $\omega(e,e')\not=0$ if and only if $e'=i_E(e)$.
Then:
\begin{itemize}
\item If $\omega$ is orthogonal and $i_E$ has one fixed point, then the basis $E$ is said to be of type B.
\item If $\omega$ is orthogonal and $i_E$ has no fixed point, then $E$ is said to be of type D.
\item If $\omega$ is symplectic, then $i_E$ cannot have a fixed point, and the basis $E$ is said to be of type C.
\end{itemize}
\end{definition}

In \cite{Dimitrov-Penkov}, the homogeneous spaces of the form $\mathbf{G}/\mathbf{P}$ have been described, where $\mathbf{G}$ is one of the classical ind-groups $\mathrm{SL}(\infty)$ (or $\GL(\infty)$), $\mathrm{SO}(\infty)$, $\mathrm{Sp}(\infty)$,
and $\mathbf{P}\subset \mathbf{G}$ is a splitting parabolic subgroup.
The term ``splitting'' means that $\mathbf{P}$ contains the Cartan subgroup of transformations inside $\mathbf{G}$ which are diagonal in some basis $E$ (isotropic in the case of $\mathrm{SO}(\infty)$ and $\mathrm{Sp}(\infty)$) of the underlying space $V$.


The description is by means of the notion of generalized flag.

\begin{definition}
(a) A {\em generalized flag} of $V$ is a collection $\mathcal{F}$ of subspaces of $V$ which is totally ordered by inclusion and such that
\begin{itemize}
    \item every $F\in\mathcal{F}$ has an immediate predecessor $F'$ or an immediate successor $F''$ in $\mathcal{F}$;
    \item every vector $v\in V\setminus\{0\}$ belongs to $F''\setminus F'$ for a unique pair of consecutive subspaces $(F',F'')$ of $\mathcal{F}$.
\end{itemize}

(b) In the case where $V$ is equipped with an orthogonal of symplectic form $\omega$,
we say that $\mathcal{F}$ is {\it isotropic} if there is an involution $i_\mathcal{F}:\mathcal{F}\to\mathcal{F}$ such that $i_\mathcal{F}(F)=F^\perp$ for all $F\in\mathcal{F}$, where $F^\perp$ stands for the orthogonal subspace to $F$ with respect to $\omega$.

(c) If $E$ is a basis of $V$, then $\mathcal{F}$ is said to be {\em compatible with $E$} if 
each subspace $F\in\mathcal{F}$ has a basis formed by elements of $E$.
We say that $\mathcal{F}$ is {\em weakly compatible with $E$} if it is compatible with some basis $E'$ which differs from $E$ by finitely many vectors, i.e.,
$\# E\setminus(E\cap E')=\# E'\setminus(E\cap E')<+\infty$.
\end{definition}

In \cite{Dimitrov-Penkov}, an equivalence relation called \emph{$E$-commensurability} is introduced on generalized flags.
Then, given a generalized flag $\mathcal{F}$ compatible with a basis $E$, one defines $\Flags(\mathcal{F},E,V)$ as the set of all generalized flags in $V$ which are $E$-commensurable with $\mathcal{F}$. 
If $\mathcal{F}$ and $E$ are isotropic, one defines instead $\Flags_\omega(\mathcal{F},E,V)$ as the set of all isotropic generalized flags in $V$ which are $E$-commensurable with $\mathcal{F}$.
It is known that 
$\Flags(\mathcal{F},E,V)$ and $\Flags_\omega(\mathcal{F},E,V)$ have natural structures of ind-varieties. We will recall these structures later on. In what follows, whenever we write $\mathrm{Fl}(\mathcal{F},E,V)$ or  $\Flags_\omega(\mathcal{F},E,V)$, we assume that the generalized flag $\mathcal{F}$ is compatible with the basis $E$.

We will adopt the following notation:
\begin{itemize}
\item If $\omega$ is an orthogonal form on $V$, then 
$E$ is of type B or D, and we set in both cases
$\OFlags(\mathcal{F},E,V):=\mathrm{Fl}_\omega(\mathcal{F},E,V)$, with the following exception. If $\mathcal{F}$ contains a subspace $F$ such that $F^\perp=F$ or a subspace $F'$ such that $\dim F'^\perp/F'=2$ (in both cases $E$ has to be of type D), then $\Flags_\omega(\mathcal{F},E,V)$ consists of two isomorphic connected components, and we define $\OFlags(\mathcal{F},E,V)$ as either one. In addition, if there is $F'\in\mathcal{F}$ with $\dim F'^\perp/F'=2$, we assume that $\mathcal{F}$ contains also one of the two maximal isotropic subspaces $F$ containing $F'$.
\item If $\omega$ is a symplectic form on $V$, then $E$ is of type C and we set
$\SFlags(\mathcal{F},E,V)=\mathrm{Fl}_\omega(\mathcal{F},E,V)$.

\end{itemize}

Ind-grassmannians correspond to quotients $\mathbf{G}/\mathbf{P}$ with $\mathbf{P}$ maximal:
\begin{itemize}
\item If $\F=\{\{0\}\subset F\subset V\}$ then we set $\Grass(F,E,V)=\Flags(\F,E,V)$.
\item In the case where $V$ is equipped with an orthogonal
or symplectic form $\omega$, a minimal isotropic generalized flag has the form $\F=\{\{0\}\subset F\subset F^\perp\subset V\}$ (where $F$ and $F^\perp$ may coincide), and we set 
$\OGrass(F,E,V)=\OFlags(\mathcal{F},E,V)$ or $\SGrass(F,E,V)=\SFlags(\mathcal{F},E,V)$
depending on whether $\omega$ is orthogonal or symplectic.
\item In accordance with our convention above, when using the notation $\OGrass(F,E,V)$, we exclude the case when $\dim F^\perp/F=2$. Instead, we consider 
$\OFlags(\mathcal{F},E,V)$ where $\mathcal{F}=\{\{0\}\subset F\subset \tilde{F}\subset F^\perp\subset V\}$, $\tilde{F}$ being one of the two maximal isotropic spaces containing $F$.
\end{itemize}

\begin{remark} \label{remark-BDgrass}
If $V$ is an orthogonal space, then $V$ admits isotropic bases $E_1$ and $E_2$ of respective types B and D. Accordingly, maximal isotropic subspaces  $F$ of $V$ are of two types:
either $\dim F^\perp/F=1$ or $F^\perp=F$.
As we will see below, the corresponding ind-grassmannians
$\OGrass(F_1,E_1,V)$ for $\dim F_1^\perp/F_1=1$ and $\OGrass(F_2,E_2,V)$ for $F_2^\perp=F_2$ are isomorphic as ind-varieties.
This property is an infinite-dimensional analogue of the isomorphism stated in the second bullet point of Theorem \ref{th-intro}.
\end{remark}

\subsection{Main result} \label{section-2.2}

\begin{definition}
\label{definition-isomorphic}
(a) 
Let $\mathcal{F}$ and $\mathcal{G}$ be generalized flags of countable dimensional spaces $V$ and $W$, respectively. Without further assumption, we say that $\mathcal{F}$ and $\mathcal{G}$ are \emph{isomorphic} if there exists a linear isomorphism $\phi:V\to W$ such that $\mathcal{G}=\{\phi(F):F\in\mathcal{F}\}$.

(b) In the case where $V$ and $W$ are equipped with symplectic forms (resp., orthogonal forms) $\omega$ and $\omega'$, we assume that $\mathcal{F}$ and $\mathcal{G}$ are isotropic generalized flags and say that they are \emph{isomorphic}  if the isomorphism $\phi$
preserves the forms: $\omega'(\phi(x),\phi(y))=\omega(x,y)$ for all $(x,y)\in V\times V$.
\end{definition}

If $\mathcal{F}=\{F_\theta:\theta\in \Theta\}$ is a generalized flag in a countable-dimensional vector space $V$, compatible with a basis $E$ of $V$, then we define its orthogonal  as the chain $\mathcal{F}^\perp=\{F_\theta^\perp:\theta\in\Theta\}$
where $F_\theta^\perp$ is the annihilator of $F_\theta$ in the space $\langle E^*\rangle$, and $E^*$ denotes the system of linear functionals on $V$ dual to the basis $E$. If $V$ is equipped with an orthogonal or a symplectic form $\omega$ and the basis $E$ is isotropic, then we use this form to identify $V$ and $\langle E^*\rangle$. Moreover, the above definition of an isotropic generalized flag $\mathcal{F}$ is equivalent to the requirement $\mathcal{F}=\mathcal{F}^\perp$.

\begin{theorem} \label{T-main}
Let $X$ and $Y$ be ind-varieties of, possibly isotropic, generalized flags as above. In other words, $X=\Flags(\mathcal{F},E,V)$, or $X=\OFlags(\mathcal{F},E,V)$, or $X=\SFlags(\mathcal{F},E,V)$, and similarly
$Y=\Flags(\mathcal{G},E',W)$, or $Y=\OFlags(\mathcal{G},E',W)$, or $Y=\SFlags(\mathcal{G},E',W)$.
Then the ind-varieties $X$ and $Y$ are isomorphic whenever $\mathcal{F}$ and $\mathcal{G}$, or $\mathcal{F}$ and $\mathcal{G}^\perp$, are isomorphic, possibly as isotropic flags.

The only additional isomorphisms $X\cong Y$ are the following:
\begin{itemize}
\item $X=\Grass(F,E,V)$, $Y=\SGrass(G,E',W)$, where $\dim F=\dim G=1$;
\item $X=\OGrass(F,E,V)$, $Y=\OGrass(G,E',W)$, where $\dim F^\perp/F=1$ and $G^\perp=G$.
\end{itemize}
\end{theorem}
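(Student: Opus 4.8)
The plan is to treat the two directions separately. For the \emph{sufficiency} direction, suppose first that $\mathcal{F}$ and $\mathcal{G}$ are isomorphic via a linear isomorphism $\phi\colon V\to W$ (preserving the forms in the isotropic cases). Then $\mathcal{F}'\mapsto\phi(\mathcal{F}')$ visibly carries $E$-commensurability to $E'$-commensurability, and hence defines an isomorphism of ind-varieties $X\to Y$; one checks that it respects the ind-structures by comparing the finite-dimensional exhaustions. If instead $\mathcal{F}$ is isomorphic to $\mathcal{G}^\perp$, I would use the annihilator identification recalled just before the theorem: taking orthogonals $F\mapsto F^\perp$ inside $\langle (E')^*\rangle$ gives an isomorphism $Y=\Flags(\mathcal{G},E',W)\cong\Flags(\mathcal{G}^\perp,(E')^*,\langle (E')^*\rangle)$, and one then composes with the isomorphism coming from $\mathcal{F}\cong\mathcal{G}^\perp$. (Since an isotropic flag satisfies $\mathcal{G}=\mathcal{G}^\perp$, this alternative is genuinely new only in the general type.) Finally, the two exceptional isomorphisms must be exhibited by hand: $\Grass(F,E,V)$ with $\dim F=1$ is $\mathbb{P}(V)$, while in a symplectic space every line is isotropic, so $\SGrass(G,E',W)$ with $\dim G=1$ is again an infinite projective space; and the B/D coincidence is exactly Remark \ref{remark-BDgrass}. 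These mirror Onishchik's exceptional cases in Theorem \ref{th-intro}.

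The \emph{necessity} direction is the substance of the argument, and here I would follow the technique of \cite{PT1,PT} rather than attempt to compare the automorphism groups of \cite{IP} as abstract groups. Fix an isomorphism $\Phi\colon X\to Y$. The first step is to pin down the discrete invariants through the Picard group: both $X$ and $Y$ are exhausted by finite-dimensional flag varieties whose Picard groups are free of rank equal to the number of proper nonzero steps of the flag, and the restriction maps in the exhaustion are isomorphisms, so $\Pic(X)\cong\Pic(Y)\cong\mathbb{Z}^i$. The induced $\Phi^*$ then forces the number of steps of $\mathcal{F}$ and $\mathcal{G}$ to agree (after the usual adjustments in the isotropic D-type situations), and, by matching the extremal rays of the effective cone, it identifies the distinguished projections of $X$ and $Y$ onto their ind-grassmannian factors, up to a possible reversal of the order of the factors.

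This order reversal is precisely the source of the alternative $\mathcal{F}\cong\mathcal{G}^\perp$: a length-preserving but order-inverting matching of the factors corresponds to dualizing the flag, exactly as the relation $a_k=a_{i+1}-b_k$ does in Theorem \ref{th-intro}. Having matched the factors, I would reduce the problem to the single-step case and analyze isomorphisms between ind-grassmannians $\Grass$, $\OGrass$, $\SGrass$ directly. The key geometric invariant is the family of lines (minimal rational curves) through a point together with its incidence geometry; this distinguishes the three types and recovers the datum $\dim F$, and it is exactly the degenerations of this geometry in low corank that produce the two exceptional coincidences. Here the results on minimal orbits and their tangent geometry from \cite{Landsberg-Manivel} are the natural tool. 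Passing back from the grassmannian factors to the full flags, one reconstructs a linear isomorphism $\phi\colon V\to W$ (or $V\to\langle (E')^*\rangle$ in the dualized case) inducing $\Phi$, which yields the desired isomorphism of generalized flags.

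The main obstacle I anticipate lies in the passage from finite-dimensional to ind-level data. Unlike in Theorem \ref{th-intro}, where Onishchik's description of the automorphism group settles everything in one stroke, here $\Phi$ only yields, along compatible exhaustions $X=\bigcup_n X_n$ and $Y=\bigcup_m Y_m$, a system of finite-dimensional isomorphisms to which the finite-dimensional classification applies level by level. The delicate point is to control the residual ambiguity at each level --- the automorphisms of $X_n$ and the choice between a direct and a dualized matching --- so that the finite-dimensional linear isomorphisms can be chosen coherently and glued into a single $\phi$. Ensuring this coherence, while simultaneously verifying that no type-changing isomorphism survives outside the two exceptional families, is where I expect most of the work of Sections \ref{section-4} and \ref{section-5} to go.
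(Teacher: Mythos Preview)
Your overall architecture is right, and the exceptional cases are correctly identified, but there are two genuine gaps.

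In the sufficiency direction, the map $\mathcal{F}'\mapsto\phi(\mathcal{F}')$ carries $E$-commensurability to $\phi(E)$-commensurability, not to $E'$-commensurability; since $\phi(E)$ and $E'$ can differ by infinitely many vectors, you still need to show $\Flags(\mathcal{G},\phi(E),W)\cong\Flags(\mathcal{G},E',W)$. The paper does this as a separate lemma (Lemma~\ref{L4.1}), by constructing an automorphism of $W$ that sends one basis to the other while fixing every subspace of $\mathcal{G}$.

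The more serious gap is in the necessity direction. First, your Picard computation fails for generalized flags with infinitely many steps: along a standard exhaustion the restriction maps on Picard groups are surjections, not isomorphisms, so $\Pic(X)$ need not be a finitely generated free group and cannot be used to read off a finite ``number of steps''. Second, an isomorphism $\Phi:X\to Y$ does \emph{not} produce finite-dimensional isomorphisms level by level. Choosing exhaustions only gives a zigzag of \emph{embeddings} $X_n\hookrightarrow Y_{j(n)}\hookrightarrow X_{j'(n)}$, and these are typically strict inclusions with jumps in dimension; there is no finite stage at which Theorem~\ref{th-intro} applies. The paper's mechanism is different: one shows, via a Picard-group argument combined with Theorem~\ref{T-PT}, that each of these zigzag embeddings is a \emph{standard extension}; then Lemma~\ref{L-diagram} extracts from the resulting diagram of standard extensions a coherent diagram of injective linear maps on the underlying finite-dimensional spaces, whose direct limit is the linear isomorphism $\phi$ you want. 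The reduction to ind-grassmannians that you sketch is used, but only for the mixed-type case and for the orthogonal subcase with maximal isotropic subspaces, and it requires an equivariance lemma (Lemma~\ref{lemma-5.4new}) to guarantee that grassmannian projections of $X$ are carried to grassmannian projections of $Y$; a Picard matching alone does not give you this.
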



\begin{remark}
Note that two ind-varieties  $\OFlags(\mathcal{F}_1,E_1,V)$
and $\OFlags(\mathcal{F}_2,E_2,V)$, where $E_1$ is an isotropic basis of type B and $E_2$ is an isotropic basis of type D, may be isomorphic also beyond the special case of Remark \ref{remark-BDgrass}. This is a consequence of the observation that a given isotropic generalized flag $\mathcal{F}$ may be compatible with two different isotropic bases $E_1$ and $E_2$ of respective types B and D, as illustrated by the following example. 
\end{remark}

\begin{example} \label{E2.7}
Consider an isotropic flag $\mathcal{F}$ in an orthogonal space $V$, with the property that $W_\mathcal{F}:=\sum\limits_{\atop^{F\in\mathcal{F}}_{F\subset F^\perp}}F$ has infinite codimension in its orthogonal. Then there exist two isotropic bases $E_1$ and $E_2$ of respective types B and D with which $\mathcal{F}$ is compatible
and $E_1\cap W_\mathcal{F}=E_2\cap W_\mathcal{F}$. Consequently, $\OFlags(\mathcal{F},E_1,V)=\OFlags(\mathcal{F},E_2,V)$.

\end{example}







\section{A review on embeddings of flag varieties}

\label{section-3}

Throughout this section, $V,V',W,W'$ are finite-dimensional vector spaces.

\subsection{Short review of Picard groups for flag varieties}

For an integer $0< p< \dim V$, we denote by $\Grass(p;V)$ the grassmannian of $p$-dimensional subspaces in $V$. 
It can be realized as a projective variety via the Pl\"ucker embedding $\pi:\Grass(p;V)\hookrightarrow \mathbb{P}(\bigwedge^p V)$. Moreover, the Picard group $\Pic(\Grass(p;V))$ of $\Grass(p;V)$ is isomorphic to $(\mathbb{Z},+)$, and its generators are $\mathcal{O}_{\Grass(p;V)}(1):=\pi^*\mathcal{O}_{\mathbb{P}(\bigwedge^pV)}(1)$ and $\mathcal{O}_{\Grass(p;V)}(-1):=\pi^*\mathcal{O}_{\mathbb{P}(\bigwedge^pV)}(-1)$. 
Here $\mathcal{O}_{\mathbb{P}(\bigwedge^pV)}(-1)$ stands for the tautological bundle of $\mathbb{P}(\bigwedge^pV)$ and $\mathcal{O}_{\mathbb{P}(\bigwedge^pV)}(1)$ stands for its dual.

For a sequence of integers $0<p_1<\ldots<p_k<\dim V$, we denote by $\Flags(p_1,\ldots,p_k;V)$ the variety of (partial) flags
\[
\Flags(p_1,\ldots,p_k;V)=\{(V_1,\ldots,V_k)\in\Grass(p_1;V)\times\cdots\times\Grass(p_k;V):V_1\subset\ldots\subset V_k\}.
\]
We have 
\[\Pic(\Flags(p_1,\ldots,p_k;V))\cong \mathbb{Z}^{k}.\]
More precisely, if we denote by $L_i$ the pull-back 
\[L_i=\mathrm{pr}_i^*\mathcal{O}_{\Grass(p_i;V)}(1)\]
along the projection 
\[\mathrm{pr}_i:\Flags(p_1,\ldots,p_k;V)\to \Grass(p_i;V)\]
(for $i=1,\ldots,k$),
then $[L_1],\ldots,[L_{k}]$ is a set of generators of the Picard group, which we will refer to as the set of {\em preferred generators}.

If $V$ is a vector space endowed with an orthogonal or symplectic form $\omega$, we assume that the sequence $(p_1,\ldots,p_k)$ satisfies 
$$p_i+p_{k-i+1}=\dim V\quad\mbox{ for all $i=1,\ldots,k$}.$$ 

\bigskip

{\bf Orthogonal case:} Here we assume that the form $\omega$ is orthogonal.
If $\frac{\dim V}{2},\frac{\dim V}{2}-1\notin\{p_1,\ldots,p_k\}$ (which is automatic when $\dim V$ is odd), we define $\OFlags(p_1,\ldots,p_k;V)\subset\Flags(p_1,\ldots,p_k;V)$ as the subvariety of {\it isotropic flags}, i.e., flags $(F_1\subset\ldots\subset F_k)$ such that $F_i^\perp= F_{k-i+1}$ for all $i$. If $\frac{\dim V}{2}\in\{p_1,\ldots,p_k\}$ or $\frac{\dim V}{2}-1\in\{p_1,\ldots,p_k\}$, the subvariety of $\Flags(p_1,\ldots,p_k;V)$ of isotropic flags consists of two irreducible components, and we define $\OFlags(p_1,\ldots,p_k;V)$ as either of these two components.

Moreover, as it is well known every isotropic subspace of dimension $\frac{\dim V}{2}-1$ is contained in exactly two Lagrangian subspaces, so we lose no generality in considering only sequences $(p_1,\ldots,p_k)$ which satisfy the  condition
$$
\frac{\dim V}{2}-1\in\{p_1,\ldots,p_k\}\quad\Rightarrow\quad \frac{\dim V}{2}\in\{p_1,\ldots,p_k\}.
$$

We denote $\OGrass(p;V):=\OFlags(p,\dim V-p;V)$ if $p\notin\{\frac{\dim V}{2},\frac{\dim V}{2}-1\}$
and $\OGrass(\frac{\dim V}{2};V):=\OFlags(\frac{\dim V}{2};V)$, assuming that $\dim V$ is even in the latter case.
We do not define an orthogonal grassmannian for $p=\frac{\dim V}{2}-1$ as we consider instead $\OFlags(\frac{\dim V}{2}-1,\frac{\dim V}{2},\frac{\dim V}{2}+1;V)$.

Let $\ell=\lfloor\frac{k}{2}\rfloor$. 
If $\frac{\dim V}{2}-1\notin\{p_1,\ldots,p_k\}$, then the pull-backs $L_i:=\mathrm{pro}_i^*\mathcal{O}_{\OGrass(p_i;V)}(1)$
by the projections $\mathrm{pro}_i:\OFlags(p_1,\ldots,p_k;V)\to\OGrass(p_i;V)$, for $i\in\{1,\ldots,\ell\}$, is a set of generators of the Picard group $\Pic\,\OFlags(p_1,\ldots,p_k;V)$, which we call \emph{preferred generators}.
If $\frac{\dim V}{2}-1\in\{p_1,\ldots,p_k\}$, that is $\frac{\dim V}{2}-1=p_{\ell-1}$, then the preferred generators $L_i$ of $\Pic\,\OFlags(p_1,\ldots,p_k;V)$ are  as above except for $i=\ell-1$, and the $(\ell-1)$-th preferred generator is by definition
$(\bigwedge^{\frac{\dim V}{2}-1}S_{\ell-1})^*$ where $S_{\ell-1}$ is the tautological bundle of rank $\frac{\dim V}{2}-1$ on $\OFlags(p_1,\ldots,p_k;V)$.

\bigskip

{\bf Symplectic case:} 
If the form $\omega$ is symplectic, we denote by $\SFlags(p_1,\ldots,p_k;V)\subset\Flags(p_1,\ldots,p_k;V)$ the subvariety of isotropic flags. Moreover, we set $\SGrass(p;V):=\SFlags(p,\dim V-p;V)$ if $\dim V\not=2p$, and $\SGrass(\frac{\dim V}{2};V):=\SFlags(\frac{\dim V}{2};V)$.

Let $\ell=\lfloor\frac{k}{2}\rfloor$. Then
$\Pic\,\SFlags(p_1,\ldots,p_k;V)\cong\mathbb{Z}^\ell$, and the pull-backs $L_i:=\mathrm{prs}_i^*\mathcal{O}_{\SGrass(p_i;V)}(1)$ by the projections $\mathrm{prs}_i:\SFlags(p_1,\ldots,p_k;V)\to\SGrass(p_i;V)$ for $i\in\{1,\ldots,\ell\}$
yield a set of generators $[L_1],\ldots,[L_{\ell}]$ of $\Pic\,\SFlags(p_1,\ldots,p_k;V)$, which again we refer to as \emph{preferred generators}.

\bigskip

We close this subsection with the following well-known fact.

\begin{lemma}
\label{L3.1}
Let $\mathcal{M}$ be a line bundle on $\Flags(p_1,\ldots,p_k;V)$, $\OFlags(p_1,\ldots,p_k;V)$, or $\SFlags(p_1,\ldots,p_k;V)$, and assume that the equality 
$$[\mathcal{M}]=n_1[L_1]+\ldots+n_{k}[L_{k}]\quad\mbox{with $n_1,\ldots,n_{k}\in\mathbb{Z}$}$$
holds in the Picard group, where $[L_1],\ldots,[L_\ell]$ is the respective set of preferred generators. Then the following conditions are equivalent:
\begin{itemize}
\item[\rm (i)] $\mathcal{M}$ is very ample; 
\item[\rm (ii)] $\mathcal{M}$ is ample;
\item[\rm (iii)] $n_i>0$ for all $i\in\{1,\ldots,k\}$.
\end{itemize}
\end{lemma}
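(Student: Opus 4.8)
The plan is to establish the cycle of implications \text{(i)}$\Rightarrow$\text{(ii)}$\Rightarrow$\text{(iii)}$\Rightarrow$\text{(i)}. All three varieties are rational homogeneous spaces $G/P$, and by construction each preferred generator $L_i$ is the pullback, along a projection to an (isotropic) grassmannian, of the very ample generator $\mathcal{O}(1)$; in the exceptional orthogonal case where $\frac{\dim V}{2}-1\in\{p_1,\ldots,p_k\}$, the generator $(\bigwedge^{\frac{\dim V}{2}-1}S_{\ell-1})^*=\det(S_{\ell-1})^*$ is globally generated because the surjection $\bigwedge^{\frac{\dim V}{2}-1}V^*\otimes\mathcal{O}\twoheadrightarrow\det(S_{\ell-1})^*$ provides enough sections. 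Hence every preferred generator is globally generated, and in particular nef. The implication \text{(i)}$\Rightarrow$\text{(ii)} is immediate, since a very ample bundle is ample.

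For \text{(iii)}$\Rightarrow$\text{(i)}, I would realize $\mathcal{M}$ as the pullback of a very ample bundle under a closed immersion. In the general type case, the product of the Pl\"ucker embeddings composed with the inclusion $\Flags(p_1,\ldots,p_k;V)\hookrightarrow\prod_i\Grass(p_i;V)$ yields a closed immersion $\iota:\Flags(p_1,\ldots,p_k;V)\hookrightarrow\prod_i\mathbb{P}(\bigwedge^{p_i}V)$ with $\iota^*\mathcal{O}(1,\ldots,1)\cong\bigotimes_i L_i$. Composing $\iota$ with the multi-Veronese--Segre embedding attached to the positive integers $n_i$ gives again a closed immersion whose hyperplane class pulls back to $\bigotimes_i L_i^{\otimes n_i}\cong\mathcal{M}$, so $\mathcal{M}$ is very ample. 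The isotropic cases are identical, using the projections $\mathrm{pro}_i$ (resp.\ $\mathrm{prs}_i$) to the isotropic grassmannians; the only modification is the determinantal generator above, which is likewise globally generated and separates the relevant points and tangent directions, so the combined map stays a closed immersion.

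For \text{(ii)}$\Rightarrow$\text{(iii)}, I would test $\mathcal{M}$ against rational curves dual to the preferred generators. For each index $j$ there is a $T$-stable Schubert curve $C_j\cong\mathbb{P}^1$ (the minimal $T$-orbit closure attached to the simple reflection at the $j$-th omitted node of the parabolic) such that $\deg_{C_i}L_j=\delta_{ij}$: indeed $C_j$ is contracted to a point by every projection $\mathrm{pr}_i$ with $i\neq j$ and maps isomorphically onto a line under $\mathrm{pr}_j$, on which $\mathcal{O}(1)$ has degree $1$. Hence $\deg_{C_j}\mathcal{M}=n_j$. Since an ample line bundle has positive degree on every curve, ampleness of $\mathcal{M}$ forces $n_j>0$ for all $j$, which is \text{(iii)}.

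The crux is verifying the duality $\deg_{C_i}L_j=\delta_{ij}$ uniformly across the three types, and in particular handling the folded indexing and the determinantal generator in the orthogonal case with $\frac{\dim V}{2}-1$ present, where the relevant Schubert curve and the associated line bundle must be matched with care. One may bypass this geometry via the representation-theoretic dictionary: identifying line bundles on $G/P$ with $P$-dominant weights $\lambda$, the generators $L_i$ correspond to the fundamental weights at the omitted nodes, the coefficient $n_i$ equals $\langle\lambda,\alpha_i^\vee\rangle$, and the equivalence \text{(ii)}$\Leftrightarrow$\text{(iii)} becomes the classical statement that $\mathcal{L}_\lambda$ is ample exactly when $\lambda$ is $P$-regular, together with the fact that ampleness and very ampleness coincide on $G/P$.
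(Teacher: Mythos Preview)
The paper does not give a proof; the lemma is introduced as ``the following well-known fact'' and left unproved. Your argument is the standard one and is correct in outline: the Pl\"ucker--Segre--Veronese construction for (iii)$\Rightarrow$(i), and testing against the $T$-stable Schubert curves $C_j$ attached to the simple roots at the omitted nodes for (ii)$\Rightarrow$(iii). The representation-theoretic reformulation you offer at the end---identifying $\Pic(G/P)$ with characters of $P$ and invoking the criterion that $\mathcal{L}_\lambda$ is ample iff $\lambda$ is $P$-regular dominant iff $\mathcal{L}_\lambda$ is very ample---is indeed the cleanest way to treat all three types at once and is how most references package the statement.

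The only genuine subtlety, which you yourself flag, is the type~D situation with $\frac{\dim V}{2}-1=p_{\ell-1}$. There the preferred generator $L_{\ell-1}=\det(S_{\ell-1})^*$ carries the weight $\omega_{n-1}+\omega_n$ rather than a single fundamental weight, so your asserted duality $\deg_{C_i}L_j=\delta_{ij}$ and the identification $n_i=\langle\lambda,\alpha_i^\vee\rangle$ do not hold literally in the paper's basis (for instance $\deg_{C_\ell}L_{\ell-1}=1$, not $0$). One must pass to the fundamental-weight basis before the Schubert-curve pairing becomes diagonal. This is a bookkeeping issue tied to the particular generators the paper chooses in that corner case, not a flaw in your method.
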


\subsection{Embeddings of flag varieties}

In this section, we denote by $X$ one of the flag varieties
$$\Flags(p_1,\ldots,p_K;V),\quad\SFlags(p_1,\ldots,p_{K};V),\quad \OFlags(p_1,\ldots,p_{K};V),$$ 
and by $Y$ a respective flag variety
$$\Flags(q_1,\ldots,q_L;W),\quad\SFlags(q_1,\ldots,q_{L};W),\quad\OFlags(q_1,\ldots,q_{L};W)$$
of the same type as $X$.  
Consider an embedding (i.e. closed immersion) of flag varieties
\[\varphi:X\hookrightarrow Y,\]
together with the group homomorphism on Picard groups
\[\varphi^*:\Pic\,Y\to\Pic\,X\]
which it induces. 
Let
$[L_1],\ldots,[L_{k}]$ and $[M_1],\ldots,[M_{\ell}]$ be the respective sets of preferred generators of $\Pic\,X$ and $\Pic\,Y$ (in the sense of the previous subsection), where $k=K$ and $\ell=L$, or $k=\lfloor\frac{K+1}{2}\rfloor$ and $\ell=\lfloor\frac{L+1}{2}\rfloor$, depending on whether a flag variety of general type or a variety of isotropic flags is considered.

\begin{lemma}
\label{L-3.2}
For all $j\in\{1,\ldots,\ell\}$, we have $\varphi^*([M_j])\in \mathbb{Z}_{\geq 0}[L_1]+\ldots+\mathbb{Z}_{\geq 0}[L_{k}]$.
\end{lemma}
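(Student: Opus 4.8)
The plan is to show that the pullback of each preferred generator $[M_j]$ of $\Pic\,Y$ has nonnegative coordinates with respect to the preferred generators $[L_1],\ldots,[L_k]$ of $\Pic\,X$, and the natural approach is to translate this into a statement about (very) ampleness via Lemma \ref{L3.1}. The key observation is that each preferred generator $M_j$ on $Y$ is, by construction, a pullback of $\mathcal{O}(1)$ from a grassmannian (or the appropriate determinantal/Pl\"ucker bundle in the exceptional orthogonal case), hence $M_j$ is globally generated and its complete linear system yields a morphism to projective space that separates the relevant pairs of subspaces. In particular, although an individual $M_j$ need not be ample on $Y$, I would exploit that $\varphi$ is a closed immersion and that restriction of a globally generated line bundle to a closed subvariety remains globally generated.

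First I would fix $j$ and consider the projection $\mathrm{pr}_j\colon Y\to \Grass(q_j;W)$ (or its isotropic analogue), so that $M_j=\mathrm{pr}_j^*\mathcal{O}(1)$. Composing with $\varphi$ gives a morphism $\mathrm{pr}_j\circ\varphi\colon X\to\Grass(q_j;W)$, and $\varphi^*([M_j])=(\mathrm{pr}_j\circ\varphi)^*[\mathcal{O}(1)]$. Since $\mathcal{O}(1)$ on the grassmannian is very ample, its pullback under any morphism is globally generated; thus $\varphi^*(M_j)$ is a globally generated line bundle on $X$. The heart of the argument is then a purely numerical lemma on flag varieties of each of the three types: a line bundle $\mathcal{M}$ with $[\mathcal{M}]=\sum_i n_i[L_i]$ is globally generated if and only if all $n_i\geq 0$. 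This is the global-generation counterpart of the ampleness criterion in Lemma \ref{L3.1}, and for generalized (partial) flag varieties $G/P$ it follows from the standard theory of line bundles associated to dominant weights: $\mathcal{M}$ is globally generated precisely when the corresponding weight is dominant, which in the preferred-generator coordinates means exactly $n_i\geq 0$ for all $i$. I would invoke this (for instance via the Borel--Weil description of sections, or by reducing to the fact that the $L_i$ are themselves the globally generated generators of the nef cone) to conclude.

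The main obstacle I anticipate is the bookkeeping in the orthogonal type, and specifically the treatment of the modified $(\ell-1)$-th preferred generator $(\bigwedge^{\frac{\dim V}{2}-1}S_{\ell-1})^*$ used when $\frac{\dim V}{2}-1\in\{p_1,\ldots,p_k\}$. Unlike the other generators this is not literally a pullback of $\mathcal{O}(1)$ from a grassmannian but a determinant of a tautological subbundle, so I would need to check separately that it too is globally generated (which it is, being the pullback along the composite with the appropriate isotropic grassmannian or Lagrangian component) and that the global-generation criterion in preferred coordinates still reads ``all coefficients nonnegative'' after this substitution. Relatedly, because the two types of orthogonal flag variety (types B and D) have slightly different Picard structures near the middle dimension, one must verify that the nef/globally-generated cone is indeed the nonnegative orthant in the chosen generators; this is where I would be most careful, though no genuinely new idea beyond dominant-weight theory for the classical groups should be required.

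Assembling these pieces: $\varphi^*([M_j])$ is globally generated for each $j$ because it is the pullback of a globally generated bundle under a morphism, and a globally generated line bundle on $X$ has all its coordinates in the preferred-generator basis nonnegative; hence $\varphi^*([M_j])\in\mathbb{Z}_{\geq 0}[L_1]+\ldots+\mathbb{Z}_{\geq 0}[L_k]$, as claimed. The only subtlety, as noted, is ensuring the global-generation criterion applies verbatim to the modified generator in the orthogonal case, which I would address by a direct check on the relevant projection.
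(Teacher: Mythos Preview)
Your argument is correct but takes a different route from the paper's. The paper uses only the ampleness criterion of Lemma \ref{L3.1}: since $\varphi$ is a closed immersion, pullback preserves ampleness, so the linear map $\varphi^*$ sends the strictly positive cone $\mathbb{Z}_{>0}[M_1]+\cdots+\mathbb{Z}_{>0}[M_\ell]$ into $\mathbb{Z}_{>0}[L_1]+\cdots+\mathbb{Z}_{>0}[L_k]$; a group homomorphism with this property must send each $[M_j]$ into the closed nonnegative orthant (consider $N[M_j]+\sum_{m\neq j}[M_m]$ for growing $N$). Your approach instead observes that each $M_j$ is globally generated, that global generation is preserved under pullback by \emph{any} morphism, and that on a rational homogeneous variety the globally generated (equivalently nef) cone is the nonnegative orthant in the preferred generators. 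The paper's argument is shorter and needs nothing beyond the already-stated Lemma \ref{L3.1}, at the cost of requiring $\varphi$ to be an embedding and a small limiting step at the end. Your argument lands directly on the closed cone and would work for arbitrary morphisms, not just embeddings, but it needs the global-generation analogue of Lemma \ref{L3.1}; you correctly flag the modified orthogonal generator as the place to check this, and indeed on $G/P$ nef and globally generated coincide, so the claim follows by taking the closure of the ample cone described in Lemma \ref{L3.1}.
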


\begin{proof}
Since $\varphi$ is an embedding, if $\mathcal{M}$ is an ample line bundle on $Y$ then $\varphi^*\mathcal{M}$ should be an ample line bundle on $X$. In view of Lemma \ref{L3.1}, we must have
\[\varphi^*(\mathbb{Z}_{>0}[M_1]+\ldots+\mathbb{Z}_{>0}[M_{\ell}])\subset \mathbb{Z}_{>0}[L_1]+\ldots+\mathbb{Z}_{>0}[L_{k}].\]
The claim of the lemma follows.
\end{proof}

We now recall from \cite{PT} the notion of linear embedding, standard extension, and factorization through direct product.

\begin{definition}
\label{D-linear}
Let $\varphi:X\hookrightarrow Y$ be an embedding of flag varieties
as above.

(a) We say that $\varphi$
is {\em linear} if 
\[\varphi^*[M_j]=0\quad\mbox{or}\quad\varphi^*[M_j]\in\{[L_1],\ldots,[L_{k}]\}\]
for all $j\in\{1,\ldots,\ell\}$. 

(b.1) We say that $\varphi$ is a {\em strict standard extension} if there are 
\begin{itemize}
\item a linear monomorphism $\alpha:V\hookrightarrow W$ and a decomposition $W=\mathrm{Im}\,\alpha\oplus K$;
\item a nondecreasing sequence of subspaces $K_0=\{0\}\subset K_1\subset K_2\subset\ldots\subset K_\ell=K$;
\item a surjective, nondecreasing map $\kappa:\{0,1,\ldots,\ell\}\to\{0,1,\ldots,k\}$ such that, for all $i\in\{1,\ldots,\ell\}$, $K_{i-1}= K_i\Rightarrow\kappa(i-1)<\kappa(i)$;
\item in the case where $V$ and $W$ are equipped with nondegenerate symmetric or antisymmetric forms $\omega$ and $\phi$, respectively, then the monomorphism $\alpha$ is compatible with the forms in the sense that $\phi(\alpha(v_1),\alpha(v_2))=\omega(v_1,v_2)$ and the decomposition $W=\mathrm{Im}\,\alpha\oplus K$ is orthogonal;
\end{itemize}
so that $\varphi$ can be expressed as
\[\varphi:(F_0=\{0\},F_1,\ldots,F_k)\mapsto (\alpha(F_{\kappa(1)})\oplus K_1,\ldots,\alpha(F_{\kappa(\ell)})\oplus K_\ell).\] 

(b.2) When $X=\Flags(p_1,\ldots,p_k;V)$ and $Y=\Flags(q_1,\ldots,q_\ell;W)$, we say that $\varphi$ is a {\em modified standard extension} if $\varphi$ equals the composition $\delta\circ\tilde\varphi$ of a strict standard extension $\tilde\varphi:X\hookrightarrow Y^\vee:=\mathrm{Fl}(\dim W-q_\ell,\ldots,\dim W-q_1;W^*)$ with the isomorphism
\[\delta:Y^\vee\to Y,\ (Z_1,\ldots,Z_\ell)\mapsto (Z_{\ell}^\perp,\ldots,Z_1^\perp).\]

(b.3) We say that $\varphi$ is a {\em standard extension} if $\varphi$ is a strict or a modified standard extension.

(c) We say that $\varphi$ \textit{factors through a direct product} if there are $s\geq 2$, a decomposition 
$\{p_1,\ldots,p_{k}\}=R_1\sqcup\ldots\sqcup R_s$ into nonempty subsets, and exponents $t_1,\ldots,t_s\geq 1$ such that $\varphi$ factors as the composition
\[
X\stackrel{\psi_R}{\hookrightarrow } \prod_{i=1}^s\Flags'(R_i;V)^{t_i}\stackrel{\psi}{\hookrightarrow} Y
\]
where $\psi_R$ is the canonical embedding
and $\psi$ is an embedding, and the notation $\Flags'$ means $\Flags$ or $\Flags_\omega$ depending on whether $X$ is consists of general or isotropic flags.

(d.1) Assume that $W$ is endowed with an orthogonal or, respectively, symplectic form so that $V$ is an isotropic subspace of $W$. Then, there are natural embeddings
$$
X=\Flags(p_1,\ldots,p_k;V)\hookrightarrow \OFlags(p_1,\ldots,p_k;W)\quad\mbox{and}\quad
X^\vee\hookrightarrow \OFlags(p_1,\ldots,p_k;W),
$$
respectively,
$$
X=\Flags(p_1,\ldots,p_k;V)\hookrightarrow \SFlags(p_1,\ldots,p_k;W)\quad\mbox{and}\quad
X^\vee\hookrightarrow \SFlags(p_1,\ldots,p_k;W),
$$
which we call {\it isotropic extensions}.

(d.2) A {\it combination of standard and isotropic extensions} is an embedding of the form
\begin{eqnarray*}
\OFlags(p_1,\ldots,p_k;V)\stackrel{t}{\hookrightarrow}  \Flags(p_1,\ldots,p_k;V)
\stackrel{\zeta}{\hookrightarrow} \Flags(q_1,\ldots,q_\ell;V') \qquad\qquad\qquad
 \\ \stackrel{\chi}{\hookrightarrow} \OFlags(q_1,\ldots,q_\ell;W)
\stackrel{\xi}{\hookrightarrow} \OFlags(r_1,\ldots,r_m;W'),
\end{eqnarray*}
respectively,
\begin{eqnarray*}
\SFlags(p_1,\ldots,p_k;V)\stackrel{t}{\hookrightarrow} \Flags(p_1,\ldots,p_k;V)
\stackrel{\zeta}{\hookrightarrow} \Flags(q_1,\ldots,q_\ell;V') \qquad\qquad\qquad
 \\ \stackrel{\chi}{\hookrightarrow} \SFlags(q_1,\ldots,q_\ell;W)
\stackrel{\xi}{\hookrightarrow} \SFlags(r_1,\ldots,r_m;W'),
\end{eqnarray*}
where $t$ is the tautological embedding, $\zeta,\xi$ are standard extensions, and $\chi$ is an isotropic extension.
\end{definition}

\begin{remark}
\label{remark-3.4}
Given a standard extension $\varphi:X\to Y$, the splitting $W=\mathrm{Im}\,\alpha\oplus K$ is in general not unique if $X$ and $Y$ are of general type, and is unique if $X$ and $Y$ are both of orthogonal or symplectic type, since in the latter cases this splitting is assumed orthogonal.
\end{remark}

The following proposition is based on \cite[Proposition 2.3]{PT}.

\begin{proposition}
\label{prop-linear}
Let $\varphi:X=\Flags(p_1,\ldots,p_k;V)\hookrightarrow Y=\Flags(q_1,\ldots,q_\ell;W)$ be an embedding of  flag varieties.

The following conditions are equivalent.
\begin{itemize}
\item[\rm (i)] The embedding $\varphi$ is linear.
\item[\rm (ii)] There are 
\begin{itemize}
\item[$\bullet$] a partition $\{1,\ldots,\ell\}=I_0\sqcup I_1\sqcup\ldots\sqcup I_{k}$, with $I_i\not=\emptyset$ for $i\not=0$, 
\item[$\bullet$] a sequence of linear embeddings $\varphi[i]=(\varphi_{i,j})_{j\in I_i}:\Grass(p_i;V)\hookrightarrow \prod_{j\in I_i}\Grass(q_j;W)$, for $0\leq i\leq k$, and if $I_0\not=\emptyset$ a constant map $X_0:=\{\mathrm{pt}\}\hookrightarrow\prod_{j\in I_0}\Grass(q_j;W)$ \end{itemize}
such that the following diagram commutes
\[
\xymatrix{X=\Flags(p_1,\ldots,p_k;V) \ar@{^{(}->}[r]^\varphi \ar@{^{(}->}[d]^\mu& Y=\Flags(q_1,\ldots,q_\ell;W) \ar@{^{(}->}[d]^\pi \\
X_0\times \prod_{i=1}^{k}\Grass(p_i;V) \ar@{^{(}->}[r]^{\quad\prod\varphi[i]} & \prod_{j=1}^{\ell}\Grass(q_j;W), }
\]
where the vertical arrows are the natural embeddings. 
\end{itemize}
A similar result holds in the symplectic and orthogonal cases.
\end{proposition}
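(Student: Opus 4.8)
The plan is to prove the nontrivial implication (i)$\Rightarrow$(ii); the converse is a short computation. Indeed, assume the diagram of (ii) commutes, and write $\mathrm{pr}_j\colon Y\to\Grass(q_j;W)$ for the natural projection, so that $[M_j]=\mathrm{pr}_j^*\mathcal{O}(1)$ and $\varphi^*[M_j]=(\mathrm{pr}_j\circ\varphi)^*\mathcal{O}(1)$. For $j\in I_0$ the composite $\mathrm{pr}_j\circ\varphi$ is constant, hence $\varphi^*[M_j]=0$; for $j\in I_i$ with $i\ge 1$ it equals $\varphi_{i,j}\circ\mathrm{pr}_i$ with $\varphi_{i,j}$ a linear embedding of grassmannians, so $\varphi_{i,j}^*\mathcal{O}(1)=\mathcal{O}(1)$ and thus $\varphi^*[M_j]=\mathrm{pr}_i^*\mathcal{O}(1)=[L_i]$. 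Hence $\varphi$ is linear.

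For (i)$\Rightarrow$(ii), set $f_j:=\mathrm{pr}_j\circ\varphi\colon X\to\Grass(q_j;W)$, so $\varphi^*[M_j]=f_j^*\mathcal{O}(1)$, and use that linearity forces each $\varphi^*[M_j]$ to be $0$ or a single preferred generator $[L_i]$. First I would dispose of the case $\varphi^*[M_j]=0$: as $\mathcal{O}(1)$ is very ample on $\Grass(q_j;W)$ and $X$ is complete, a morphism pulling a very ample bundle back to a trivial one is constant, so $f_j$ is constant and I place $j$ in $I_0$. In the case $\varphi^*[M_j]=[L_i]=\mathrm{pr}_i^*\mathcal{O}(1)$, the bundle $f_j^*\mathcal{O}(1)$ restricts trivially to each fibre of the projection $\mathrm{pr}_i\colon X\to\Grass(p_i;V)$; since these fibres are connected projective varieties, $f_j$ is constant on them, and the rigidity lemma yields a unique factorization $f_j=\varphi_{i,j}\circ\mathrm{pr}_i$ with $\varphi_{i,j}\colon\Grass(p_i;V)\to\Grass(q_j;W)$. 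Placing such $j$ in $I_i$ defines the partition $\{1,\ldots,\ell\}=I_0\sqcup I_1\sqcup\cdots\sqcup I_k$, and the identity $\pi\circ\varphi=(\prod_i\varphi[i])\circ\mu$ holds by construction.

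It remains to check the structural claims. Each $\varphi_{i,j}$ satisfies $\varphi_{i,j}^*\mathcal{O}(1)=\mathcal{O}(1)$, i.e. it is a degree-one morphism of grassmannians; invoking the classification of such morphisms (from \cite{Landsberg-Manivel}, as used in \cite{PT}) I conclude that each $\varphi_{i,j}$ is a linear embedding, in particular a closed immersion. Since a tuple of morphisms from a projective variety into a product is a closed immersion as soon as one component is, each $\varphi[i]=(\varphi_{i,j})_{j\in I_i}$ is a linear embedding. To see $I_i\ne\emptyset$ for $i\ge 1$, note that if $I_i$ were empty then $\pi\circ\varphi$ would depend on no coordinate $V_i$, hence factor through the projection forgetting $V_i$; the latter has positive-dimensional connected fibres $\Grass(p_i-p_{i-1};V_{i+1}/V_{i-1})$ and is therefore noninjective, contradicting that $\pi\circ\varphi$ is an embedding. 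This settles the general-type case; the symplectic and orthogonal cases follow verbatim after replacing projections and preferred generators by their isotropic analogues of Section \ref{section-3} and adjusting the index range to $\ell=\lfloor k/2\rfloor$.

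The hard part will be the identification in the previous paragraph of each degree-one morphism $\varphi_{i,j}$ with a genuine linear embedding: the rigidity and factorization steps are formal, but passing from ``$\mathcal{O}(1)$ pulls back to $\mathcal{O}(1)$'' to an explicit linear description of the morphism of grassmannians is precisely the geometric input encapsulated in \cite[Proposition 2.3]{PT} and \cite{Landsberg-Manivel}.
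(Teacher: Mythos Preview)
Your proof is correct and takes essentially the same route as the paper: both give the short Picard-group computation for (ii)$\Rightarrow$(i), and both defer the substantive direction (i)$\Rightarrow$(ii) to \cite[Proposition~2.3]{PT}, with your version supplying a helpful sketch of the factorization-via-rigidity argument that the paper omits entirely. One small caution: the proposition only asserts that each $\varphi[i]$ is a linear embedding into the \emph{product} $\prod_{j\in I_i}\Grass(q_j;W)$, not that every individual component $\varphi_{i,j}$ is a closed immersion; your stronger claim about the components may well hold, but establishing it is exactly the geometric input you correctly identify as residing in \cite{PT} and \cite{Landsberg-Manivel}, so your argument and the paper's ultimately rest on the same citation.
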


\begin{proof}
(i)$\Rightarrow$(ii) is shown in \cite[Proposition 2.3]{PT}. 
(ii)$\Rightarrow$(i): for every $j\in\{1,\ldots,\ell\}$, assuming that $j\in I_i$ with $i\not=0$, we have
\begin{eqnarray*}
[(\pi\circ\varphi)^*\mathrm{pr}_j^*\mathcal{O}_{\Grass(q_j;W)}(1)] & = & 
[\mu^*\mathrm{pr}_i^*\varphi[i]^*\mathrm{pr}_j^*\mathcal{O}_{\Grass(q_j;W)}(1)] \\
 & \in & \{0,[\mu^*\mathrm{pr}_i^*\mathcal{O}_{\Grass(p_i;V)}(1)]\}=\{0,[L_i]\}
\end{eqnarray*}
by the assumption that $\varphi[i]$ is linear. If $j\in I_0$, then
\[[(\pi\circ\varphi)^*\mathrm{pr}_j^*\mathcal{O}_{\Grass(q_j;W)}(1)]= 
[\mu^*\mathrm{pr}_0^*\varphi[0]^*\mathrm{pr}_j^*\mathcal{O}_{\Grass(q_j;W)}(1)]=0.\] The conclusion follows.
\end{proof}

A key result is now the following:

\begin{theorem}[{\cite[Theorem 1]{PT1}}, {\cite[Theorem 4.2]{PT}}]
\label{T-PT}
{\rm (a)} Let $\varphi:X\hookrightarrow Y$ be an embedding of flag varieties of the same type. Assume that $\varphi$ is linear, does not factor through a direct product, and all the maps $\varphi[i]$ of Proposition \ref{prop-linear} are standard extensions. Then $\varphi$ is a standard extension. 

{\rm (b)} Assume that $X$ and $Y$ are grassmannians of the same type. In addition,
in the orthogonal case suppose that $X$ and $Y$ are of the form $\OGrass(p;V)$ with $p\notin\{\frac{\dim V}{2}-1,\frac{\dim V}{2}\}$.  
\begin{itemize}
\item[\rm (i)] If $X$ and $Y$ are of general type, then $\varphi:X\hookrightarrow Y$ is a standard extension if and only if is linear and does not factor through a projective space.
\item[\rm (ii)] If $X$ and $Y$ are of orthogonal or symplectic type, then $\varphi:X\hookrightarrow Y$ is a standard extension or a combination of standard and isotropic extensions if and only if $\varphi$ is linear and does not factor through a projective space and, in the orthogonal case, also not through a quadric.
\end{itemize}
\end{theorem}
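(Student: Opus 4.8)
The plan is to prove part (b) first, as it carries the geometric content, and then to derive part (a) by a gluing argument built on Proposition \ref{prop-linear}. In part (b) the forward implications are routine: a standard extension, or a combination of standard and isotropic extensions, has exactly the component maps prescribed by Proposition \ref{prop-linear} and is therefore linear, and one checks directly that it does not factor through a projective space, nor, in the orthogonal case, through a quadric, once the excluded small grassmannians are set aside. The substance lies in the reverse implications. Linearity of $\varphi:\Grass(p;V)\hookrightarrow\Grass(q;W)$ means that $\varphi$ has Pl\"ucker degree one, $\varphi^*\mathcal{O}_{\Grass(q;W)}(1)=\mathcal{O}_{\Grass(p;V)}(1)$; the pullback cannot vanish, since $\varphi$ is a nonconstant embedding, and cannot exceed degree one by hypothesis. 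I would encode $\varphi$ through the pullback $\mathcal{S}:=\varphi^*S$ of the rank-$q$ tautological subbundle $S$ on $\Grass(q;W)$, a bundle on $\Grass(p;V)$ with $\det\mathcal{S}^*=\mathcal{O}_{\Grass(p;V)}(1)$.

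The key step is to classify such bundles. Using the homogeneity of $\Grass(p;V)$ and the splitting behaviour of $\mathcal{S}$ along the lines (minimal rational curves) of the Pl\"ucker embedding, one shows that $\mathcal{S}$ is uniform and splits in exactly one of three ways: as $S_p\oplus\mathcal{O}^{\oplus(q-p)}$, as its dual analogue, or---only in the excluded low-rank situations---as $\mathcal{O}(-1)\oplus\mathcal{O}^{\oplus(q-1)}$. The first two cases produce a monomorphism $\alpha:V\hookrightarrow W$ together with a complementary subspace $K$, from which $\varphi$ is reconstructed as a strict or modified standard extension in the sense of Definition \ref{D-linear}(b.1)--(b.2); the third case is exactly a factorization through $\mathbb{P}(\bigwedge^pV)$, ruled out by hypothesis. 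For this classification I would invoke the description of low-rank uniform bundles and the geometry of the Pl\"ucker image from \cite{Landsberg-Manivel}. In the orthogonal and symplectic cases the same analysis applies to the tautological bundle of the isotropic grassmannian, with $\omega$ forcing $\alpha$ to be compatible with the forms up to scalar and the complement to split into an isotropic summand and a form-preserving part; decomposing $W$ accordingly reconstructs $\varphi$ either as a standard extension or as the four-step composite $t,\zeta,\chi,\xi$ of Definition \ref{D-linear}(d.2), while the extra exclusion of a quadric removes the degenerate splitting associated to the hypersurface cut out by $\omega$.

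For part (a), Proposition \ref{prop-linear} records a partition $\{1,\ldots,\ell\}=I_0\sqcup\cdots\sqcup I_k$ and component maps $\varphi[i]:\Grass(p_i;V)\hookrightarrow\prod_{j\in I_i}\Grass(q_j;W)$; since the image of $\varphi$ is a genuine flag in $W$, each $\varphi[i]$ in fact lands in a sub-flag-variety and is, by hypothesis, a standard extension carrying its own monomorphism $\alpha_i:V\hookrightarrow W$ and nested complements. The task is to glue these data into a single global standard extension, and the decisive point is that non-factorization through a direct product forces the $\alpha_i$ to agree, up to the harmless ambiguity of Remark \ref{remark-3.4}: were two blocks to carry independent source embeddings, the corresponding tautological subspaces would move in independent directions inside $W$ and $\varphi$ would split as a product of the associated sub-flag-varieties, a contradiction. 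With a common $\alpha$ fixed, the flag condition in $Y$ and the monotone surjection $\kappa$ of Definition \ref{D-linear}(b.1) organize the complements into a single nested chain $K_1\subset\cdots\subset K_\ell$, and the extension formula reassembles $\varphi$ as a standard extension.

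The main obstacle is the bundle classification underlying part (b): proving that a Pl\"ucker-degree-one embedding of grassmannians forces $\varphi^*S$ to split in one of the three prescribed ways. This is where the genuine algebro-geometric work is concentrated, and it is precisely the juncture at which the low-dimensional exceptions---projective spaces, and orthogonally quadrics---enter and must be excised by hypothesis.
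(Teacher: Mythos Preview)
This theorem is not proved in the present paper: it is stated with attribution to \cite[Theorem~1]{PT1} and \cite[Theorem~4.2]{PT} and then invoked as a black box in the proof of Theorem~\ref{T5.1}. There is therefore no argument here against which to compare your proposal.

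Your sketch is a plausible outline of how such results are established, and is broadly in the spirit of the cited references: the reverse implication in part~(b) does proceed by analysing the pullback of the tautological subbundle and its splitting behaviour along lines of the grassmannian, and part~(a) does amount to showing that the grassmannian-level data supplied by Proposition~\ref{prop-linear} can be glued into a single standard extension once factorization through a direct product is excluded. A genuine assessment of the details, however, would require comparison with \cite{PT1} and \cite{PT} themselves rather than with this paper.
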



\subsection{Additional lemmas}

\begin{lemma}
\label{L-se}
{\rm (a)} The composition of two standard extensions is a standard extension.

{\rm (b)} The composition of two standard extensions $\varphi_1$ and $\varphi_2$ is strict if and only if $\varphi_1$ and $\varphi_2$ are both strict or are both modified.
\end{lemma}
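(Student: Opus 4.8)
The plan is to reduce the whole statement to two facts about \emph{strict} standard extensions, and then to bookkeep the four cases of (b) using the identity ``modified $=\delta\circ(\text{strict})$''. Since, by Definition \ref{D-linear}(b.2), modified standard extensions occur only between flag varieties of general type, in the symplectic and orthogonal cases every standard extension is strict, so both (a) and (b) follow at once from the single fact that strict composed with strict is strict. Thus the real content is in the general type case.

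First I would prove the fact (A): the composition of two strict standard extensions is strict. Given $\varphi_1:X\to Y$ with data $(\alpha_1,\,W=\mathrm{Im}\,\alpha_1\oplus K,\,\{K_j\},\,\kappa_1)$ and $\varphi_2:Y\to Z$ with data $(\alpha_2,\,U=\mathrm{Im}\,\alpha_2\oplus L,\,\{L_s\},\,\kappa_2)$, I would simply exhibit the composite data: the monomorphism $\alpha=\alpha_2\circ\alpha_1$, the splitting $U=\mathrm{Im}\,\alpha\oplus(\alpha_2(K)\oplus L)$, the subspaces $K'_s=\alpha_2(K_{\kappa_2(s)})\oplus L_s$, and the map $\kappa=\kappa_1\circ\kappa_2$, and then check the axioms of Definition \ref{D-linear}(b.1). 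Surjectivity and monotonicity of $\kappa$ and of $\{K'_s\}$ are immediate, and in the isotropic situation the splitting is orthogonal and $\alpha$ preserves the form because $\alpha_2$ does and the two given splittings are orthogonal. The one delicate point is the implication $K'_{s-1}=K'_s\Rightarrow\kappa(s-1)<\kappa(s)$: from $K'_{s-1}=K'_s$ one extracts $L_{s-1}=L_s$ and $K_{\kappa_2(s-1)}=K_{\kappa_2(s)}$; the first gives $\kappa_2(s-1)<\kappa_2(s)$ by the axiom for $\varphi_2$, and then $K_\bullet$ is constant on the interval $\{\kappa_2(s-1),\ldots,\kappa_2(s)\}$, so the axiom for $\varphi_1$ applied to each consecutive step telescopes to $\kappa_1(\kappa_2(s-1))<\kappa_1(\kappa_2(s))$.

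Next I would establish the fact (B): for a strict standard extension $\psi:A\to B$ of general type, the map $\psi^\vee:=\delta_B^{-1}\circ\psi\circ\delta_A:A^\vee\to B^\vee$ is again a strict standard extension, so that the square $\psi\circ\delta_A=\delta_B\circ\psi^\vee$ commutes; together with the involutivity $\delta_B\circ\delta_{B^\vee}=\mathrm{id}$, which is just the identity $(S^\perp)^\perp=S$ combined with the double order reversal built into $\delta$. To verify that $\psi^\vee$ is strict I would trace a flag in $A^\vee$ through $\delta_A$, then $\psi$, then $\delta_B^{-1}$, using the linear-algebra identity $\alpha(A^\perp)^\perp=\alpha^\vee(A)\oplus\mathrm{Im}(\alpha)^\perp$ for subspaces $A\subset W^*$, where $\alpha^\vee:W^*\to U^*$ is the monomorphism transpose to the projection $U\to\mathrm{Im}\,\alpha\cong W$. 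This presents $\psi^\vee$ in strict standard form, with monomorphism $\alpha^\vee$, complement $\mathrm{Im}(\alpha)^\perp$, added subspaces $(\mathrm{Im}\,\alpha\oplus K_{\bullet})^\perp$, and the order-reversed, complemented combinatorial map, whose axioms are checked exactly as in (A).

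Finally I would assemble (a) and (b) by the four-case analysis. Writing each modified factor as $\delta\circ(\text{strict})$, pushing every occurrence of $\delta$ through the subsequent strict extension via the commuting square of (B), and cancelling consecutive $\delta$'s by involutivity, one obtains: strict $\circ$ strict and modified $\circ$ modified are strict, whereas strict $\circ$ modified and modified $\circ$ strict are of the form $\delta\circ(\text{strict})$, hence modified. This gives (a), that the composite is always a standard extension, and (b), that it is strict precisely when the two factors are of the same kind. I expect the main obstacle to be fact (B): setting up the duality so that the square $\psi\circ\delta_A=\delta_B\circ\psi^\vee$ holds \emph{on the nose}, since it is exactly this naturality of $\delta$ with respect to strict extensions that allows the $\delta$'s to be transported and cancelled; once (B) is in place the case analysis is purely formal.
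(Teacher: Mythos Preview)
Your proposal is correct and supplies in full the argument that the paper leaves entirely to the reader: the paper's proof is the single word ``Straightforward.'' Your reduction to facts (A) and (B), the explicit composite data in (A) with the telescoping verification of the implication $K'_{s-1}=K'_s\Rightarrow\kappa(s-1)<\kappa(s)$, and the four-case bookkeeping using the naturality square $\psi\circ\delta_A=\delta_B\circ\psi^\vee$ together with the involutivity of $\delta$ are exactly the ingredients one needs, so there is no genuine difference in approach, only in level of detail. One small slip to fix: in your statement of the linear-algebra identity in (B) the subspace should live in $V^*$ rather than $W^*$ (your monomorphism $\alpha$ goes $V\to W$, so the identity you want is $(\alpha(Z^\perp))^\perp=\alpha^\vee(Z)\oplus(\mathrm{Im}\,\alpha)^\perp$ for $Z\subset V^*$, with $\alpha^\vee:V^*\to W^*$ the section of $\alpha^*$ coming from the splitting); the computation you describe goes through with this correction.
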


\begin{proof}
Straightforward.
\end{proof}

\begin{lemma}
\label{L-diagram}
Let 
\[\xymatrix{X=\mathrm{Fl}(n_1,\ldots,n_k;V) \ar@{^{(}->}[rr]^\chi \ar@{^{(}->}[rd]^\varphi & & \mathrm{Fl}(q_1,\ldots,q_m;U)=Z \\
 & Y=\mathrm{Fl}(p_1,\ldots,p_\ell;W) \ar@{^{(}->}[ru]^\psi}\]
be a commutative diagram of strict standard extensions.
Assume that 
\begin{itemize}
\item $\varphi$ corresponds to $\alpha:V\hookrightarrow W$, a decomposition $W=\mathrm{Im}\,\alpha\oplus K$, a nondecreasing sequence of subspaces $K_0=\{0\}\subset K_1\subset\ldots\subset K_\ell=K$, and a surjective map $\kappa:\{0,\ldots,\ell\}\to\{0,\ldots,k\}$, in the sense of Definition \ref{D-linear}\,(b.1);
\item $\psi$ corresponds similarly to $\beta:W\hookrightarrow U$, $U=\mathrm{Im}\,\beta\oplus L$, $L_0\subset\ldots\subset L_m=L$, $\lambda:\{0,\ldots,m\}\to\{0,\ldots,\ell\}$;
\item $\chi$ corresponds similarly to $\gamma:V\hookrightarrow U$, $U=\mathrm{Im}\,\gamma\oplus M$, $M_0\subset\ldots\subset M_m=M$, $\mu:\{0,\ldots,m\}\to\{0,\ldots,k\}$.
\end{itemize}
Then we have $\mu=\kappa\circ\lambda$, $M_i=L_i\oplus\beta(K_i)$ for all $i\in\{1,\ldots,m\}$, and up to modifying $\beta$ we can assume that $\gamma=\beta\circ\alpha$.

Similar statements hold in the symplectic and orthogonal cases, and the equality $\gamma=\beta\circ\alpha$ always holds in this case.
\end{lemma}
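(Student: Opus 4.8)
The plan is to compute the composition $\psi\circ\varphi$ directly from the formula of Definition \ref{D-linear}\,(b.1) and to compare it term by term with $\chi$, exploiting the fact that a strict standard extension sends a flag $(F_1,\ldots,F_k)$ to a flag whose $i$-th term splits canonically into a ``moving part'' (the image of a single $F_j$) and a ``constant part'' (a fixed subspace). First I would apply $\varphi$ to a flag $(F_1,\ldots,F_k)$, obtaining the flag in $W$ with $j$-th term $\alpha(F_{\kappa(j)})\oplus K_j$, and then apply $\psi$ to it. Using the linearity of $\beta$ together with the splittings $W=\mathrm{Im}\,\alpha\oplus K$ and $U=\mathrm{Im}\,\beta\oplus L$ (which guarantee that all the sums below are genuinely direct), the $i$-th term of $\psi\circ\varphi$ becomes
\[
(\beta\circ\alpha)\big(F_{\kappa(\lambda(i))}\big)\ \oplus\ \big(\beta(K_{\lambda(i)})\oplus L_i\big),\qquad 1\le i\le m.
\]

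Next I would match this with the $i$-th term $\gamma(F_{\mu(i)})\oplus M_i$ of $\chi$, using $\chi=\psi\circ\varphi$. The comparison separates into moving and constant parts, and to isolate the moving part I would vary the flag: since the dimensions $n_1<\cdots<n_k$ are strictly increasing and $n_k<\dim V$, each $F_j$ with $1\le j\le k$ can be perturbed inside the Grassmannian $\mathrm{Gr}(n_j-n_{j-1};F_{j+1}/F_{j-1})$ while keeping all other terms of the flag fixed. If $\mu(i)\ne\kappa(\lambda(i))$, say $\mu(i)<\kappa(\lambda(i))$, then perturbing $F_{\kappa(\lambda(i))}$ changes the moving subspace $(\beta\circ\alpha)(F_{\kappa(\lambda(i))})$ (as $\beta\circ\alpha$ is injective), hence the whole right-hand term, while leaving $\gamma(F_{\mu(i)})\oplus M_i$ untouched, a contradiction; the reverse inequality and the cases where one index is $0$ are symmetric. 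This yields $\mu=\kappa\circ\lambda$. Reading off the constant parts then gives $M_i=L_i\oplus\beta(K_{\lambda(i)})$ for all $i$ (consistently with $\mu=\kappa\circ\lambda$), and equality of the moving parts gives $\gamma(F)=(\beta\circ\alpha)(F)$ as \emph{subspaces} for every $F$ of each dimension $n_j$; surjectivity of $\mu$ ensures this for all $j\in\{1,\ldots,k\}$, in particular for $j=1$, i.e.\ for every line.

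Finally, to upgrade equality of subspaces to equality of maps, I would invoke the elementary fact that two injective linear maps $\gamma,\beta\circ\alpha\colon V\to U$ with $\gamma(L)=(\beta\circ\alpha)(L)$ for every line $L$ must satisfy $\gamma=c\,(\beta\circ\alpha)$ for a single nonzero scalar $c$ (linearity forces the pointwise proportionality factor to be constant once $\dim V\ge2$). Replacing $\beta$ by $c\beta$ changes neither $\mathrm{Im}\,\beta$, nor the splitting $U=\mathrm{Im}\,\beta\oplus L$, nor the subspaces $\beta(K_{\lambda(i)})$, nor the map $\psi$, so this is an admissible modification of $\beta$ after which $\gamma=\beta\circ\alpha$, settling the general-type claim. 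In the symplectic and orthogonal cases $\alpha,\beta,\gamma$ are form-compatible, so $\beta\circ\alpha$ is form-compatible and $\gamma=c\,(\beta\circ\alpha)$ forces $c^2=1$; since the orthogonal splitting is now unique (cf.\ Remark \ref{remark-3.4}) and rescaling by $c=\pm1$ preserves form-compatibility, the equality $\gamma=\beta\circ\alpha$ holds with no residual freedom in the splitting. \textbf{The main obstacle} I anticipate is exactly this last step: cleanly passing from ``equal images on all lines'' to ``proportional maps,'' and then controlling the residual scalar — harmless up to rescaling in the general case, but requiring the form-compatibility bookkeeping, and in particular the sign constraint $c=\pm1$, to reach $\gamma=\beta\circ\alpha$ in the isotropic case.
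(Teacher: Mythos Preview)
Your overall strategy matches the paper's --- write out $\psi\circ\varphi$ explicitly and compare with $\chi$ term by term --- and your derivation of $\mu=\kappa\circ\lambda$ and $M_i=L_i\oplus\beta(K_{\lambda(i)})$ is fine (your perturbation argument is a legitimate alternative to the paper's ``intersect over all flags, then count dimensions''). The genuine gap is the step ``equality of the moving parts then gives $\gamma(F)=(\beta\circ\alpha)(F)$ as subspaces.'' From
\[
\gamma(F_{\mu(i)})\oplus M_i\;=\;(\beta\circ\alpha)(F_{\mu(i)})\oplus M_i
\]
you only learn that $\gamma(F_{\mu(i)})$ and $(\beta\circ\alpha)(F_{\mu(i)})$ are two complements of $M_i$ inside the same subspace of $U$; complements are not unique, so this does \emph{not} yield $\gamma(F)=(\beta\circ\alpha)(F)$. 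The underlying issue is that $\mathrm{Im}\,\gamma$ and $\mathrm{Im}(\beta\circ\alpha)$ are both complements of $M$ in $U$ but need not coincide, and a scalar rescaling of $\beta$ cannot fix that. The paper repairs this with an \emph{additive} modification: it replaces $\beta$ by $\tilde\beta$ with $\tilde\beta(x)-\beta(x)\in M_{i_0}$ on $\mathrm{Im}\,\alpha$ (for the minimal $i_0$ with $\mu(i_0)\neq 0$), which leaves $\psi$ unchanged but forces $\mathrm{Im}\,\gamma=\mathrm{Im}(\beta\circ\alpha)$. Only then can one project onto $\mathrm{Im}\,\gamma$ along $M$ to obtain $\gamma(F)=(\beta\circ\alpha)(F)$, after which your scalar argument finishes the general case.

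In the isotropic case the gap closes on its own: the splittings $U=\mathrm{Im}\,\gamma\oplus M$ and $U=\mathrm{Im}(\beta\circ\alpha)\oplus\big(\beta(K)\oplus L\big)$ are both orthogonal, hence $\mathrm{Im}\,\gamma=M^\perp=\mathrm{Im}(\beta\circ\alpha)$ automatically, and your argument then goes through. Two small remarks: ``$j=1$'' gives subspaces of dimension $n_1$, not lines, though equality on all $n_1$-dimensional subspaces still forces proportionality by intersecting over all such subspaces through a given vector; and your form-compatibility argument leaves $c=-1$ open, but replacing $\beta$ by $-\beta$ is still form-compatible and does not alter $\psi$, so this is harmless.
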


\begin{proof}
Since $\chi=\psi\circ\varphi$, for all $\mathcal{F}=(F_1,\ldots,F_k)\in X$, all $i\in\{1,\ldots,m\}$, we have
\begin{equation}
\label{1-L-diagram}
\gamma(F_{\mu(i)})\oplus M_i=\beta\alpha(F_{\kappa\lambda(i)})\oplus \beta(K_{\lambda(i)})\oplus L_i.
\end{equation}
It follows from the equality $\bigcap_{\mathcal{F}\in X}F_{\mu(i)}=\{0\}$ that
\[M_i=\beta(K_{\lambda(i)})\oplus L_i\quad\mbox{for all $i=1,\ldots,m$}.\]
Then, since the dimensions of $F_1,\ldots,F_k$ are pairwise distinct, formula (\ref{1-L-diagram}) implies for dimension reasons
\[\mu(i)=\kappa\circ\lambda(i)\quad\mbox{for all $i=1,\ldots,m$}.\]
Take $i_0\in\{1,\ldots,m\}$ minimal such that $\mu(i_0)\not=0$. Then $\bigcup_{\mathcal{F}\in X}F_{\mu(i_0)}=V$ and we must have
\[\mathrm{Im}\,\gamma\oplus M_{i_0}=\mathrm{Im}\,\beta\circ\alpha\oplus M_{i_0}.\]
Up to replacing $\beta$ by some $\tilde\beta$ such that $\tilde\beta(x)-\beta(x)\in M_{i_0}$ for all $x\in \mathrm{Im}\,\alpha$ (which will not affect the map $\psi$), we can assume that 
\[\mathrm{Im}\,\gamma=\mathrm{Im}\,\beta\circ\alpha.\]
Then by projecting (\ref{1-L-diagram}) to $\mathrm{Im}\,\gamma=\mathrm{Im}\,\beta\circ\alpha$,
with respect to the decomposition $U=\mathrm{Im}\,\gamma\oplus M$, we get
\[\gamma(F_{\mu(i)})=\beta\circ\alpha(F_{\mu(i)})\quad\mbox{for all $i=1,\ldots,m$,\ all $\mathcal{F}\in X$}.\]
Up to multiplying $\beta$ by a scalar, we can assume that the equality $\gamma=\beta\circ\alpha$ holds.
\end{proof}

\section{Construction of isomorphisms}

\label{section-4}

In this and the next section we prove Theorem \ref{T-main}.
Here we show that all pairs of ind-varieties that are claimed to be isomorphic in Theorem \ref{T-main} are indeed isomorphic.

We start with the following known fact.

\begin{lemma}
\label{L4.1}
{\rm (a)} Let $\mathcal{F}$ be a generalized flag in $V$ compatible with two bases $E$ and $E'$. Then the ind-varieties $\Flags(\mathcal{F},E,V)$ and $\Flags(\mathcal{F},E',V)$ are isomorphic.

{\rm (b)} Moreover, in the case where $V$ is endowed with an orthogonal (respectively, a symplectic) form $\omega$, $\mathcal{F}$ is isotropic, and $E$ and $E'$ are isotropic, then the ind-varieties $\OFlags(\mathcal{F},E,V)$ and $\OFlags(\mathcal{F},E',V)$ (respectively, $\SFlags(\mathcal{F},E,V)$ and $\SFlags(\mathcal{F},E',V)$) are isomorphic.
\end{lemma}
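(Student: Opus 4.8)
The plan is to prove that if a single generalized flag $\mathcal{F}$ is compatible with two bases $E$ and $E'$ of $V$, then the resulting ind-varieties of $E$-commensurable (respectively $E'$-commensurable) generalized flags coincide as sets of subspaces, and that the ind-variety structures agree. The key observation I would start from is that $E$-commensurability is an equivalence relation, and two generalized flags are $E$-commensurable precisely when each subspace of one differs from the corresponding subspace of the other by a finite-dimensional correction. My first step is therefore to show that the \emph{set} $\Flags(\mathcal{F},E,V)$ does not actually depend on the choice of compatible basis: if $\mathcal{G}$ is $E$-commensurable with $\mathcal{F}$, I would verify that $\mathcal{G}$ is also $E'$-commensurable with $\mathcal{F}$, using that $\mathcal{F}$ itself is compatible with both $E$ and $E'$ so that $E$ and $E'$ ``see'' the same flag up to the finite adjustments allowed by commensurability. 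This would give an equality of underlying sets $\Flags(\mathcal{F},E,V)=\Flags(\mathcal{F},E',V)$, and similarly in the isotropic case for part (b).

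Having identified the two ind-varieties as the same set, the remaining content is to check that the two ind-variety structures, built from $E$ and from $E'$ respectively, are isomorphic. Here I would recall the standard exhaustion: one chooses an increasing chain of finite-dimensional subspaces of $V$ adapted to $\mathcal{F}$ and the basis, realizing $\Flags(\mathcal{F},E,V)$ as an increasing union of finite-dimensional flag varieties. The natural strategy is to show that the identity map on the common underlying set is an isomorphism of ind-varieties by exhibiting a single exhaustion that is compatible with both the $E$-structure and the $E'$-structure, or at least a pair of mutually refining exhaustions. Since $E$ and $E'$ both yield compatible finite-dimensional flag varieties of the same dimension sequences at each finite stage, the identity restricts to an isomorphism of finite-dimensional flag varieties at each level, and these are compatible with the inclusions; passing to the direct limit then yields the desired isomorphism of ind-varieties.

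For part (b), in the presence of an orthogonal or symplectic form $\omega$ with $\mathcal{F}$ isotropic and both $E$ and $E'$ isotropic, I would run the same argument inside the category of isotropic generalized flags, taking care that the finite-dimensional exhaustion can be chosen by nondegenerate $\omega$-invariant subspaces so that each finite stage is a genuine isotropic (orthogonal or symplectic) flag variety. The condition $\mathcal{F}=\mathcal{F}^\perp$ is preserved throughout, and the only subtlety is to ensure that the two connected components issue (in the type D orthogonal case) is handled consistently, but since we have fixed $\mathcal{F}$ and merely change the compatible isotropic basis, the relevant component is determined by $\mathcal{F}$ and does not change.

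I expect the main obstacle to be the verification that the two ind-variety structures genuinely agree rather than merely the two underlying sets. The set-theoretic identity is essentially formal from the definition of $E$-commensurability, but matching the \emph{schematic} or \emph{ind-variety} structures requires producing mutually cofinal exhaustions adapted to both bases simultaneously; the delicate point is choosing, at each finite stage, a finite-dimensional subspace that is adapted to $\mathcal{F}$, contains the finitely many basis vectors by which $E$ and $E'$ differ on the relevant subspaces, and (in part (b)) is $\omega$-nondegenerate. Once such a compatible exhaustion is in hand, the isomorphism follows by taking the direct limit of the identity maps on the finite-dimensional strata.
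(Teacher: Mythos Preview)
Your approach differs from the paper's. You aim to show that $\Flags(\mathcal{F},E,V)$ and $\Flags(\mathcal{F},E',V)$ are the \emph{same} set of generalized flags and that the identity map is an isomorphism of ind-varieties. The paper instead constructs a linear automorphism $\alpha:V\to V$ satisfying $\alpha(E)=E'$ and $\alpha(F)=F$ for every $F\in\mathcal{F}$ (and preserving $\omega$ in part (b)); this $\alpha$ then visibly carries one ind-variety to the other via $\mathcal{G}\mapsto\alpha(\mathcal{G})$. The construction is short: for each $F\in\mathcal{F}$ the sets $\hat E_F:=(E\cap F)\setminus\bigcup_{F'\subsetneq F}(E\cap F')$ and $\hat E'_F$ have equal cardinality, so one chooses bijections $\hat E_F\to\hat E'_F$ (compatible with the involutions $i_E$, $i_{E'}$ in the isotropic case, after possibly rescaling) and assembles them into $\alpha$.

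Your route has gaps that the paper's device avoids. First, the set-level equality---that $E$-commensurability with $\mathcal{F}$ agrees with $E'$-commensurability---amounts to showing that any generalized flag $\mathcal{G}$ weakly compatible with $E$ and differing from $\mathcal{F}$ by a finite-dimensional correction is automatically weakly compatible with $E'$; this is plausible but is an actual lemma, not a formality. Second, and more seriously, your assertion that ``the identity restricts to an isomorphism of finite-dimensional flag varieties at each level'' is not correct as written: the two standard exhaustions are built from finite subsets of $E$ and of $E'$ respectively, so their strata live in \emph{different} finite-dimensional subspaces $\langle E_n\rangle$ and $\langle E'_n\rangle$ of $V$, and the identity does not restrict to a map between them. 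You would have to manufacture mutually cofinal exhaustions and verify that all the interlocking inclusions are closed immersions---work you correctly flag as the delicate point but do not carry out. The paper's $\alpha$ sidesteps both issues: since $\alpha(E)=E'$ as sets, $\alpha$ transports the $E$-exhaustion to an $E'$-exhaustion on the nose, and the isomorphism of ind-varieties is immediate.
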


\begin{proof}
It suffices to construct a linear automorphism $\alpha:V\to V$ such that
$$
\alpha(E)=E',\qquad \forall F\in\mathcal{F},\, \alpha(F)=F,
$$
and $\alpha$ preserves the form $\omega$ in the situation (b) of the lemma.
Then $\alpha$ clearly induces an isomorphism $\mathcal{G}\mapsto \alpha(\mathcal{G})$ between the two considered ind-varieties.

(a) For $F\in\mathcal{F}$, we denote $E_F:=\{e\in E:e\in F\}$ and $\hat{E}_F:=E_F\setminus\bigcup_{\atop^{F'\in \mathcal{F}}_{F'\subset F}}E_{F'}$. We define similarly $E'_F$ and $\hat{E}'_F$.
Since the generalized flag $\mathcal{F}$ is $E$- and $E'$-compatible,
we have 
$$
F=\langle E_F\rangle=\langle\hat{E}_F\rangle\oplus \sum_{\atop^{F'\in \mathcal{F}}_{F'\subset F}}F'\quad\mbox{for all $F\in\mathcal{F}$.}
$$
This yields decompositions $E=\bigsqcup_{F\in\mathcal{F}}\hat{E}_F$, $E'=\bigsqcup_{F\in\mathcal{F}}\hat{E}'_F$
and, moreover, $|\hat{E}_F|=|\hat{E}'_F|=\dim F/(\sum_{\atop^{F'\in \mathcal{F}}_{F'\subset F}}F')$ for all $F\in\mathcal{F}$. Next, for every $F\in\mathcal{F}$, we can choose a bijection $\alpha_F:\hat{E}_F\to\hat{E}'_F$. This defines a bijection $\bigsqcup_{F\in\mathcal{F}}\alpha_F:E\to E'$, whose corresponding automorphism $\alpha:V\to V$ stabilizes each subspace of $\mathcal{F}$. 

(b) We adapt the construction made in (a) in the following way. 
The generalized flag $\mathcal{F}$ is equipped with the involution $F\mapsto F^\perp$,
and the bases $E$ and $E'$ are equipped with involutions $i_E:E\to E$ and $i_{E'}:E'\to E'$ satisfying the conditions of Definition \ref{D2.1}.
For every $F\in\mathcal{F}$ such that $F\subset F^\perp$ we have
$$\langle E_F\rangle=F\subset F^\perp=\langle E_{F^\perp}\rangle\subset V=F^\perp\oplus\langle i_E(E_F)\rangle$$
and for all $F\in\mathcal{F}$
 we have either $i_E(\hat{E}_F)\cap \hat{E}_F=\emptyset$ or $i_E(\hat{E}_F)=\hat{E}_F$; the latter equality holds for at most one $F$, namely the one, if it exists, such that $F^\perp \subsetneq F$ are consecutive subspaces in $\mathcal{F}$. The same applies to $E'$.
Then we have decompositions
$$
E=\bigsqcup_{\atop^{F\in\mathcal{F}}_{F\subset F^\perp}} i_E(\hat{E}_F)\cup \hat{E}_F,\qquad 
E'=\bigsqcup_{\atop^{F\in\mathcal{F}}_{F\subset F^\perp}} i_{E'}(\hat{E}'_F)\cup \hat{E}'_F.
$$
Now for all $F\in\mathcal{F}$ we can find  a bijection $\alpha_F:i_E(\hat{E}_F)\cup \hat{E}_F\to i_{E'}(\hat{E}'_F)\cup \hat{E}'_F$ such that $\alpha_F(i_E(e))=i_{E'}(\alpha_F(e))$ for all $e$. Whence we have a bijection $\alpha:E\to E'$ and, up to replacing the elements in $E'$ by suitable scalar multiples, we can assume that the corresponding automorphism $\alpha:V\to V$ preserves $\omega$. In addition, $\alpha(F)=F$ for all $F\in\mathcal{F}$, and this concludes the proof of the lemma.
\end{proof}


Next we show that $\Flags(\mathcal{F},E,V)$ and $\Flags(\mathcal{G},E',W)$ 
are isomorphic whenever
\begin{itemize}
\item[\rm (A)] $\mathcal{F}$ and $\mathcal{G}$ are isomorphic in the sense of Definition \ref{definition-isomorphic}\,(a), or
\item[\rm (B)] $\mathcal{F}$ is isomorphic to the dual generalized flag $\mathcal{G}^\perp$.
\end{itemize}

Assume first that $\mathcal{F}$ and $\mathcal{G}$ are isomorphic, hence there is an isomorphism $\phi:V\to W$ such that $\phi(\mathcal{F})=\mathcal{G}$. Then $E'':=\phi(E)$ is a basis of $W$, moreover $\mathcal{G}$ is compatible with $E''$, and the map $\phi$ induces an isomorphism of ind-varieties
$$
\Flags(\mathcal{F},E,V)\stackrel{\sim}{\to} \Flags(\mathcal{G},E'',W),\ \mathcal{F}'\mapsto \phi(\mathcal{F}').
$$
Thanks to Lemma \ref{L4.1}, we have an isomorphism $\Flags(\mathcal{G},E'',W)\cong\Flags(\mathcal{G},E',W)$. Altogether, we get an isomorphism $\Flags(\mathcal{F},E,V)\cong\Flags(\mathcal{G},E',W)$ as desired.

The case (B) is a consequence of (A), Lemma \ref{L4.1}, and the fact that the map $\mathcal{G'}\mapsto\mathcal{G}'^\perp$
clearly defines an isomorphism of ind-varieties $\Flags(\mathcal{G},E',W)\stackrel{\sim}{\to} \Flags(\mathcal{G}^\perp,E'^*,\langle E'^*\rangle)$ where $E'^*$ is the dual family of the basis $E'$.

Similar reasoning shows that $\OFlags(\mathcal{F},E,V)$ and $\OFlags(\mathcal{G},E',W)$
(resp.,  $\SFlags(\mathcal{F},E,V)$ and $\SFlags(\mathcal{G},E',W)$) are isomorphic ind-varieties whenever $\mathcal{F}$ and $\mathcal{G}$ are isomorphic in the sense of Definition \ref{definition-isomorphic}\,(b). Note that in this case we have $\mathcal{F}^\perp=\mathcal{F}$ and $\mathcal{G}^\perp=\mathcal{G}$.

\medskip

We now turn our attention to the additional isomorphisms from Theorem \ref{T-main}.
First, since in a symplectic space every line is isotropic, the isomorphism
between $X=\Grass(F,E,V)$, $Y=\SGrass(G,E',W)$, where $\dim F=\dim G=1$, is obvious.


\medskip

Finally, the isomorphism between $X=\OGrass(F,E,V)$, $Y=\OGrass(G,E',W)$, where $F^\perp=F$ and $\dim G^\perp/G=1$, is also easy to verify. The key observation is that the well-known isomorphism $\OGrass(n-1,\mathbb{C}^{2n-1})\cong\OGrass(n,\mathbb{C}^{2n})$
is compatible with standard extensions. More precisely,
thanks to Lemma \ref{L4.1}, we can assume without loss of generality that $W$ is a subspace of codimension one in $V$. Let
$$
E=\{e_1,\hat{e}_1,e_2,\hat{e}_2,\ldots,e_n,\hat{e}_n,\ldots\}
$$
be a basis of type D in $V$, with involution $i_E:e_i\mapsto \hat{e}_i$.
Let 
$F:=\langle e_i:i\geq 1\rangle$ so that $F=F^\perp$.
Consider also
$$
E'=\{e_1+\hat{e}_1,e_2,\hat{e}_2,\ldots,e_n,\hat{e}_n,\ldots\}
$$
which is a basis of type B in the subspace $W\subset V$, with involution $i_{E'}:e_i\mapsto \hat{e}_i$ for all $i\geq 2$ and $i_{E'}(e_1+\hat{e}_1)=e_1+\hat{e}_1$. Let $G:=\langle e_i:i\geq 2\rangle$, thus $\dim G^\perp/G=1$.

Set $V_n:=\langle e_1,\hat{e}_1,\ldots,e_n,\hat{e}_n\rangle$
and $W_n:=\langle e_1+\hat{e}_1,e_2,\hat{e}_2,\ldots,e_n,\hat{e}_n\rangle$.
We have exhaustions
$$
\cdots\hookrightarrow
\OGrass(n,V_n)\stackrel{\alpha_n}{\hookrightarrow} \OGrass(n+1,V_{n+1})\hookrightarrow\cdots,\quad X=\lim_{\to}\OGrass(m,V_m)
$$
and
$$
\cdots\hookrightarrow
\OGrass(n-1,W_n)\stackrel{\beta_n}{\hookrightarrow} \OGrass(n,W_{n+1})\hookrightarrow\cdots,\quad Y=\lim_{\to}\OGrass(m-1,W_m),
$$
where $\alpha_n:L\mapsto L\oplus\langle e_{n+1}\rangle$
and $\beta_n:M\mapsto M\oplus\langle e_{n+1}\rangle$.

For every $n$, there is an isomorphism
$$
\phi_n:\OGrass(n-1,W_n)\to \OGrass(n,V_n),\ M\mapsto\parbox[t]{6cm}{(the unique Lagrangian subspace $\hat{M}\in \OGrass(n,V_n)$ containing $M$).}
$$
Moreover, the  diagram
$$
\begin{array}{rcl}
\OGrass(n,V_n) & \stackrel{\alpha_n}{\hookrightarrow} & \OGrass(n+1,V_{n+1}) \\
\uparrow\mbox{\scriptsize $\phi_n$} & & \uparrow\mbox{\scriptsize $\phi_{n+1}$} \\
\OGrass(n-1,W_n) & \stackrel{\beta_n}{\hookrightarrow} & \OGrass(n,W_{n+1})
\end{array}
$$
is commutative. Indeed $\alpha_n\circ\phi_n(M)$ is a Lagrangian subspace in $\OGrass(n+1,V_{n+1})$ containing $M$ and $e_{n+1}$, thus containing $M\oplus\langle e_{n+1}\rangle=\beta_n(M)$, and therefore coinciding with $\phi_{n+1}\circ\beta_n(M)$.
Hence $X$ and $Y$ are isomorphic.

\section{Non-existence of other isomorphisms} \label{section-5}


In this section we complete the proof of Theorem \ref{T-main}. This is done by proving the following two statements.

\begin{theorem}
\label{T5.1}
Assume that $X$, $Y$ is a pair of ind-varieties of generalized flags of the same type (general, orthogonal, or symplectic), different from the pair 
$$
\OGrass(F,E,V),\ \OGrass(G,E',W)\quad\mbox{with $\dim F^\perp/F=0$, $\dim G^\perp/G=1$, or vice versa.}
$$
Consider two arbitrary (possibly isotropic) generalized flags $\mathcal{F}\in X$ and $\mathcal{G}\in Y$.
Then $X$ is isomorphic to $Y$ if and only if $\mathcal{F}$ is isomorphic to $\mathcal{G}$ or to $\mathcal{G}^\perp$.
\end{theorem}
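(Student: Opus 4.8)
The plan is to reduce the statement about isomorphisms of ind-varieties to a statement about certain compatible towers of embeddings of finite-dimensional flag varieties, and then apply the structural results on such embeddings collected in Section~3. The key idea is that an isomorphism $\Psi:X\xrightarrow{\sim}Y$ of ind-varieties, together with the exhaustions $X=\varinjlim X_n$ and $Y=\varinjlim Y_m$ by finite-dimensional flag varieties of the given type, gives rise — after passing to cofinal subsequences — to a commuting ladder of embeddings $X_n\hookrightarrow Y_{m(n)}\hookrightarrow X_{n'}\hookrightarrow\cdots$ whose composites recover the connecting maps of the two exhaustions. Because the connecting maps in the standard exhaustions of ind-varieties of generalized flags are themselves standard extensions (and can be arranged to be strict), I would first invoke Theorem~\ref{T-PT} to show that each embedding $\varphi_n:X_n\hookrightarrow Y_{m(n)}$ arising from $\Psi$ is, up to factoring through a product, a standard extension (or, in the isotropic case, a combination of standard and isotropic extensions). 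The hypothesis that the pair is \emph{not} the exceptional pair $\OGrass(F,E,V),\OGrass(G,E',W)$ is precisely what rules out the isotropic extensions and the quadric/projective-space degeneracies, forcing genuine standard extensions.

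\textbf{First} I would set up the bookkeeping: choose exhaustions so that $X=\varinjlim X_n$ with $X_n=\mathrm{Fl}(p_1^{(n)},\ldots;V_n)$ (respectively the orthogonal/symplectic analogues) and similarly for $Y$, where the connecting maps are the strict standard extensions realizing the inclusions of subspaces. Then, using that an isomorphism of ind-varieties restricts, on each finite stage, to a closed immersion into a sufficiently large stage of the target, I produce the ladder. \textbf{Next} I would apply Lemma~\ref{L-3.2}, Proposition~\ref{prop-linear}, and Theorem~\ref{T-PT} stage by stage to conclude that the maps in the ladder are standard extensions; here one must check that the isomorphism does not factor through a direct product at any stage, which would contradict $\Psi$ being an isomorphism onto all of $Y$ (a product factorization would make the image too small, missing the Plücker-ample directions needed to fill up $Y$). \textbf{Then} I would feed the resulting commuting triangles of strict standard extensions into Lemma~\ref{L-diagram}, which pins down the combinatorial data $\kappa,\lambda,\mu$ and the linear maps $\alpha,\beta,\gamma$ so that $\gamma=\beta\circ\alpha$ and $\mu=\kappa\circ\lambda$. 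Taking the direct limit of the linear maps $\alpha_n$ (which by Lemma~\ref{L-diagram} are compatible up to the allowed modifications) yields a single linear isomorphism $\phi:V\to W$ inducing $\Psi$, with $\phi(\mathcal{F})=\mathcal{G}$, or — accounting for the modified standard extensions, which introduce the dualization $\delta$ — the identification $\phi(\mathcal{F})=\mathcal{G}^\perp$.

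\textbf{The main obstacle} I anticipate is twofold. First, the passage from a single ind-variety isomorphism to the ladder of finite-dimensional \emph{standard} (not merely linear) extensions is delicate: Theorem~\ref{T-PT}(a) requires the auxiliary maps $\varphi[i]$ of Proposition~\ref{prop-linear} to be standard extensions, so one must run an inductive argument (inducting on the number of steps in the flag, or on the rank) to verify this hypothesis at the grassmannian level first, via Theorem~\ref{T-PT}(b), and propagate it upward. Second, reconciling the \emph{strict} versus \emph{modified} dichotomy across the whole ladder requires care: by Lemma~\ref{L-se}(b) the parity of ``strictness'' is additive under composition, so one must show that this parity is constant along the ladder — otherwise the limiting map would not be well-defined. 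Resolving this coherently is what forces exactly the clean dichotomy $\mathcal{F}\cong\mathcal{G}$ (all strict) versus $\mathcal{F}\cong\mathcal{G}^\perp$ (all modified), matching the statement of the theorem. In the isotropic cases the equality $\gamma=\beta\circ\alpha$ holds on the nose (Lemma~\ref{L-diagram}) and the form-compatibility of $\alpha$ is automatic, which is why there the dualization $\mathcal{G}^\perp=\mathcal{G}$ collapses and no genuinely new identification appears beyond the excluded exceptional pair.
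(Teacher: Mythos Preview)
Your plan matches the paper's approach in its main architecture: build the commuting ladder from the isomorphism, use the Picard-group argument (Lemma~\ref{L-3.2}) to force linearity, upgrade to standard extensions via Theorem~\ref{T-PT}, use Lemma~\ref{L-se}(b) to stabilize the strict/modified parity, and then pass to the direct limit of the linear maps via Lemma~\ref{L-diagram}. That skeleton is correct.

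There are, however, two genuine gaps. First, your explanation of \emph{why} combinations of isotropic and standard extensions are excluded is wrong. You attribute this to the hypothesis that $(X,Y)$ is not the exceptional orthogonal-grassmannian pair, but that hypothesis plays no role at this step. What actually rules out such combinations is the commutativity of the ladder: since the horizontal maps $\psi_i,\psi'_i$ are standard extensions, a combination of isotropic and standard extensions appearing in some $\varphi_i$ or $\xi_i$ would propagate (via \cite[Lemma~3.9]{PT1}) and contradict $\psi_i$ being standard (\cite[Remark~3.10]{PT1}). The excluded-pair hypothesis is used only at the very end, and only to justify the WLOG reduction that neither $X$ nor $Y$ is an orthogonal ind-grassmannian of maximal isotropic subspaces.

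Second, and more seriously, your plan does not handle the orthogonal case where $\mathcal{F}$ or $\mathcal{G}$ contains a maximal isotropic subspace. Theorem~\ref{T-PT}(b) explicitly excludes $\OGrass(p;V)$ with $p\in\{\tfrac{\dim V}{2}-1,\tfrac{\dim V}{2}\}$, so the inductive verification at the grassmannian level that you sketch simply does not go through for those factors. The paper deals with this by a separate reduction: one first runs the main argument under the assumption that neither flag contains a maximal isotropic subspace, and then, in the remaining case, projects $X\to\hat X$ and $Y\to\hat Y$ by deleting the offending subspaces (those $F$ with $\dim F^\perp/F\le 2$), applies the already-proved case to $\hat X\cong\hat Y$ to obtain the form-preserving isomorphism $\hat\varphi_V$, and finally checks directly that $\hat\varphi_V$ already carries the deleted subspaces of $\mathcal{F}$ to those of $\mathcal{G}$. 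The special subcase where $\hat X$ degenerates to a point (i.e.\ $\mathcal{F}=\{0\subset F\subset\tilde F\subset F^\perp\subset V\}$ with $\dim\tilde F/F=1$, $\tilde F^\perp=\tilde F$) needs its own short argument via Proposition~\ref{P5.3}. Without this two-step reduction your proof is incomplete in the orthogonal type.
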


\begin{theorem}
\label{T5.2}
Assume $X$, $Y$ are two ind-varieties of generalized flags of different types.
Then $X$ is isomorphic to $Y$ if only if $X$, $Y$ (or $Y$, $X$) is the pair 
\begin{equation}
\label{newequation2}
\Grass(F,E,V),\ \SGrass(G,E',W)\quad\mbox{with $\dim F=\dim G=1$ or $\dim V/F=\dim G=1$}.
\end{equation}
\end{theorem}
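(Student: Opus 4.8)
The plan is to prove Theorem~\ref{T5.2} by reducing the question of ind-variety isomorphisms to the structure of embeddings developed in Section~\ref{section-3}, and then exploiting the exceptional isomorphisms already known in the finite-dimensional setting (Onishchik's result, as recorded in Theorem~\ref{th-intro}). First I would observe that an isomorphism $\Phi:X\xrightarrow{\sim}Y$ of ind-varieties of \emph{different} types is a very rigid object: by restricting to the exhausting finite-dimensional flag subvarieties, $\Phi$ induces, for each sufficiently large $n$, an embedding $X_n\hookrightarrow Y_m$ of finite-dimensional flag varieties of different types, compatible with the connecting standard extensions. The crucial constraint is that $X$ and $Y$ have different types, so these $X_n$ and $Y_m$ are flag varieties attached to non-isomorphic algebraic groups among $\mathrm{SL}$, $\mathrm{SO}$, $\mathrm{Sp}$. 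Hence each $X_n\hookrightarrow Y_m$ must be one of the sporadic isomorphisms permitted by Onishchik's theorem, namely $\Flags(1,\mathbb{C}^{2r})\cong\SFlags(1,\mathbb{C}^{2r})$ or $\OFlags(r-1,\mathbb{C}^{2r-1})\cong\OFlags(r,\mathbb{C}^{2r})$.

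Next I would eliminate the orthogonal--orthogonal exceptional pair from consideration here, since that is a same-type phenomenon already handled by Theorem~\ref{T5.1} and Remark~\ref{remark-BDgrass}; across genuinely different types (general versus symplectic, general versus orthogonal, symplectic versus orthogonal) the only surviving finite-dimensional coincidence is $\Flags(1,\mathbb{C}^{2r})\cong\SFlags(1,\mathbb{C}^{2r})$, i.e.\ projective spaces equal to the symplectic grassmannian of isotropic lines. I would then argue that if infinitely many levels $X_n$ are of this sporadic projective-space form, the exhaustion forces $X=\Grass(F,E,V)$ with $\dim F=1$ (an ind-projective space) and correspondingly $Y=\SGrass(G,E',W)$ with $\dim G=1$; the dual possibility $\dim V/F=1$ arises from the modified standard extensions, giving the ``or vice versa'' clause in~\eqref{newequation2}. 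Conversely, any such pair is genuinely isomorphic, which is already the obvious isomorphism constructed in Section~\ref{section-4}, so this accounts for all isomorphisms across different types.

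The final, and most delicate, step is to rule out \emph{all other} cross-type isomorphisms, i.e.\ to show that if $X$ and $Y$ are of different types and are \emph{not} the pair in~\eqref{newequation2}, then no isomorphism $\Phi$ can exist. The key tool is Lemma~\ref{L-diagram} together with Theorem~\ref{T-PT}: the induced finite-dimensional embeddings are linear and, after discarding the possibility of factoring through a projective space or a quadric, must be standard extensions (or combinations of standard and isotropic extensions in the mixed case). For an \emph{isomorphism} of ind-varieties, both $\Phi$ and $\Phi^{-1}$ yield such embeddings, and the compositions $\Phi^{-1}\circ\Phi$ and $\Phi\circ\Phi^{-1}$ must be identities; by Lemma~\ref{L-se} these compositions are again standard extensions, and the identity is a standard extension only in the trivial way, forcing the dimension data of $X$ and $Y$ to match up exactly level by level. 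Since across different types the grassmannian dimension sequences can match only through the sporadic Onishchik coincidence, this pins down the pair to~\eqref{newequation2}.

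I expect the main obstacle to be the bookkeeping in the last step: one must control not a single embedding but a compatible system of them between two exhaustions, and show that the ranks, the type data (B, C, D), and the isotropy conditions are simultaneously incompatible unless $X$ and $Y$ are the exceptional pair. The subtle point is that a cross-type isomorphism must be sporadic at \emph{every} level simultaneously, whereas the sporadic finite-dimensional isomorphisms exist only for very special dimension sequences (isotropic lines, or the B--D Lagrangian coincidence); the task is to verify that a coherent infinite tower of such coincidences can only assemble into an ind-projective-space on one side matched with an isotropic-line ind-grassmannian on the other. Once this rigidity is established, the theorem follows, completing the proof of Theorem~\ref{T-main}.
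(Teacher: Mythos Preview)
Your proposal has a genuine gap at its very first step. You write that the restriction of an isomorphism $\Phi:X\to Y$ to finite levels gives embeddings $X_n\hookrightarrow Y_m$ of flag varieties of different types, and then assert that ``each $X_n\hookrightarrow Y_m$ must be one of the sporadic isomorphisms permitted by Onishchik's theorem.'' But Onishchik's result (Theorem~\ref{th-intro}) classifies \emph{isomorphisms} of flag varieties, not embeddings, and the maps $X_n\hookrightarrow Y_m$ are merely closed immersions into strictly larger varieties; there is no reason for them to be isomorphisms. For instance, $\mathbb{P}^2=\Flags(1;\mathbb{C}^3)$ embeds into $\SGrass(2;\mathbb{C}^6)$ in many ways without the two being isomorphic. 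So nothing forces the individual levels to fall into Onishchik's list. Your later appeal to Theorem~\ref{T-PT} and Lemma~\ref{L-diagram} does not repair this: both results are stated and proved for embeddings between flag varieties \emph{of the same type}, so they give no structural control over a cross-type embedding $X_n\hookrightarrow Y_m$.

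The paper avoids this difficulty entirely by a different mechanism. Rather than analysing the finite-level embeddings, it uses Lemma~\ref{lemma-5.4new}: an isomorphism $X\cong Y$ transports the natural $\mathrm{Aut}$-equivariant projections of $Y$ onto its ind-grassmannian quotients into $\mathrm{Aut}\,X$-equivariant projections of $X$, and those are forced to land in ind-grassmannian quotients of $X$. This reduces Theorem~\ref{T5.2} immediately to the ind-grassmannian case, which is Proposition~\ref{P5.3}; a short Picard-group count then excludes the possibility that $\mathcal{F}$ has more than one proper nonzero subspace. The whole argument is a few lines. If you want to salvage your exhaustion approach, you would need a cross-type analogue of Theorem~\ref{T-PT} describing all linear embeddings $X_n\hookrightarrow Y_m$ between flag varieties of different types, which is not available in the paper.
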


The direct implications in Theorems \ref{T5.1}--\ref{T5.2} are shown in Section \ref{section-4}. It remains to prove the reverse implications.

We start with some auxiliary results. 
By $\mathbb{P}^n$ we denote the $n$-dimensional projective space and by $\mathbb{P}^\infty$ we denote the infinite-dimensional projective ind-space: $\mathbb{P}^\infty=\lim\limits_\to\mathbb{P}^n$.
The following proposition extends Theorem 2 from \cite{PT1}.

\begin{proposition}
\label{P5.3}
Let $X$, $Y$ be two ind-grassmannians, so $X=\Grass(F,E,V)$, $X=\OGrass(F,E,V)$, or $X=\SGrass(F,E,V)$, and $Y=\Grass(G,E',W)$, $Y=\OGrass(G,E',W)$, or $Y=\SGrass(G,E',W)$.
If $Y=\OGrass(G,E',W)$, we assume that  $\dim G^\perp/G\notin\{0,1\}$.
Then $X$ is isomorphic to $Y$ if and only if one of the following condition holds.
\begin{itemize}
\item[(A)] $X=\Grass(F,E,V)$ and $Y=\SGrass(G,E',W)$ with $\dim F=\dim G=1$ or $\dim V/F=\dim G=1$ (or vice versa).
\item[(B)] $X$ and $Y$ are of the same type with $\dim F=\dim G$, or $X=\Grass(F,E,V)$ and $Y=\Grass(G,E',W)$ with $\dim V/F=\dim G<\infty$ (or vice versa).
\end{itemize}
\end{proposition}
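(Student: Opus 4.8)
The plan is to establish Proposition \ref{P5.3} by reducing the classification of isomorphic ind-grassmannians to the finite-dimensional theory developed in Section \ref{section-3}, specifically Theorem \ref{T-PT}(b) and the structural results on embeddings. First I would fix exhaustions $X=\varinjlim X_n$ and $Y=\varinjlim Y_n$ by finite-dimensional (possibly isotropic) grassmannians, where the connecting maps $X_n\hookrightarrow X_{n+1}$ and $Y_n\hookrightarrow Y_{n+1}$ are standard extensions of grassmannians (this is how the ind-structure is defined). An isomorphism $f:X\to Y$ and its inverse $f^{-1}$, after passing to a cofinal subsystem, induce a commuting ladder of embeddings $\varphi_n:X_n\hookrightarrow Y_{m(n)}$ and $\psi_n:Y_n\hookrightarrow X_{m'(n)}$ of finite-dimensional grassmannians whose alternating compositions recover the standard-extension connecting maps. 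The key technical point is that each $\varphi_n$, being an embedding of grassmannians, is by Theorem \ref{T-PT}(b) either a standard extension or factors through a projective space (or, in the orthogonal case, through a quadric); one must rule out the degenerate factorizations for all but finitely many $n$ in order to conclude that the $\varphi_n$ are standard extensions.

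Next I would analyze the numerical constraints imposed by standard extensions. A standard extension $\Grass(p;V)\hookrightarrow\Grass(q;W)$ sends a $p$-dimensional subspace to $\alpha(F)\oplus K_1$ with $\dim K_1=q-p$, so it preserves the \emph{dimension invariant} $p$ up to the additive shift $q-p$ coming from the subspaces $K_i$, while a modified standard extension replaces $p$ by the codimension $\dim V-p$. Tracking these invariants along the ladder, together with the requirement that the composites be the prescribed connecting standard extensions, forces $\dim F=\dim G$ (after possibly dualizing) in the general type, symplectic type, and orthogonal type cases where both sides are of the same type, yielding condition (B). The mixed-type cases require comparing the different Picard-group structures: in the symplectic and orthogonal settings $\Pic$ of a proper flag has rank $\lfloor k/2\rfloor$, so an isomorphism between grassmannians of different types can only occur when both Picard groups are cyclic, which happens precisely for projective spaces and smooth quadrics. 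Invoking the classical low-dimensional coincidences $\Grass(1;V)=\mathbb{P}^{\dim V-1}$, $\SGrass(1;W)=\mathbb{P}^{\dim W-1}$, and the quadric identifications then pins down condition (A) and the excluded orthogonal cases $\dim G^\perp/G\in\{0,1\}$.

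The main obstacle I anticipate is controlling the factorizations through projective spaces and quadrics uniformly along the infinite ladder, rather than at a single finite stage. At any individual level $n$ the embedding $\varphi_n$ might genuinely factor through $\mathbb{P}^N$, so Theorem \ref{T-PT}(b) does not immediately give a standard extension; one must exploit that the \emph{compositions} $\psi_{m(n)}\circ\varphi_n$ equal honest standard extensions between $X_n$ and $X_{m''(n)}$ (by Lemma \ref{L-se}(a) the composite of standard extensions is a standard extension, and it cannot factor through a projective space once its source has Picard rank forcing a genuine grassmannian target). Combining this with Lemma \ref{L-diagram} — which records how the data $(\alpha,K_i,\kappa)$ of standard extensions compose — lets me transfer the standard-extension property from the composites back to the individual $\varphi_n$ for $n$ large. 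I expect that once the $\varphi_n$ are known to be standard extensions in a compatible way, the passage to the direct limit is formal, and the stable value of the dimension invariant produces exactly the dichotomy (A)/(B) claimed in the proposition.
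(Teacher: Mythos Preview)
Your proposal has two genuine gaps.

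First, the Picard-rank argument you offer for mixed types is vacuous at the grassmannian level: every ind-grassmannian, of general, symplectic, or orthogonal type, already has $\Pic\cong\mathbb{Z}$. The rank $\lfloor k/2\rfloor$ you invoke pertains to flag varieties with several steps, not to grassmannians, so it cannot separate, say, $\Grass(2,E,V)$ from $\SGrass(2,E',W)$ or from $\OGrass(2,E',W)$. You need a finer invariant than the Picard group to rule out cross-type isomorphisms.

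Second, and more seriously, the case $X=\OGrass(F,E,V)$ with $\dim F^\perp/F\in\{0,1\}$ is precisely the one your machinery cannot reach. Theorem \ref{T-PT}(b) is stated under the hypothesis $p\notin\{\frac{\dim V}{2}-1,\frac{\dim V}{2}\}$, so the dichotomy ``standard extension versus factoring through a projective space/quadric'' is unavailable for the finite-dimensional orthogonal grassmannians $\OGrass(n,V_{2n})$ and $\OGrass(n-1,V_{2n-1})$ that exhaust $X$. The ladder argument therefore stalls exactly in this case, and the paper does not try to salvage it. Instead it introduces a genuinely different, geometric invariant: the configuration of maximal linearly embedded projective subspaces through a point. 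Using the Landsberg--Manivel description, one checks that through any $x\in X$ there is a family of maximal linearly embedded $\mathbb{P}^3$'s and a family of maximal linearly embedded $\mathbb{P}^\infty$'s whose pairwise intersection is either $\{x\}$ or a $\mathbb{P}^2$; one then verifies by inspection that no ind-grassmannian $Y$ satisfying the hypotheses admits such a configuration (the candidate $Y$'s with both a $\mathbb{P}^3$-family and a $\mathbb{P}^\infty$-family all have intersections of dimension at most $1$). This is the missing idea. The remaining cases of the proposition are not proved in the paper at all but are quoted directly from \cite[Theorem~2]{PT1}; your outline is roughly in the spirit of that reference, but it is the Lagrangian orthogonal case that is new here and that your plan does not cover.
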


\begin{proof}
The case where $X$ is different from $\OGrass(F,E,V)$ with $\dim F^\perp/F\in\{0,1\}$ is treated in \cite[Theorem 2]{PT1}. Hence it remains to show that $X\not\cong Y$ whenever $X=\OGrass(F,E,V)$ with $\dim F^\perp/F\in\{0,1\}$.

We will do this by the same method used in \cite{PT1}.
Indeed, using results in \cite[Section 4]{Landsberg-Manivel}, it is not difficult to check that through any point $x\in X$, there is a family $\mathcal{P}^3$ consisting of maximal $3$-dimensional linearly embedded projective subspaces of $X$, and a family $\mathcal{P}^\infty$ of maximal linearly embedded infinite-dimensional projective ind-spaces. 
Moreover, the intersection of any space in $\mathcal{P}^3$ with a space in $\mathcal{P}^\infty$ is isomorphic to $\mathbb{P}^2$ or equals the point $x$.

We claim that this type of configuration of maximal linearly embedded projective spaces passing through a point does not appear on any ind-grassmannian $Y$. Indeed, it is well known that $Y$ admits a linear embedding into an ind-grassmannian of general type (this embedding being the identity of $Y$ itself is of general type). Using an appropriate such embedding, it is easy to check that
 the complete list of ind-grassmannians $Y$ having a family of maximal linearly embedded projective spaces $\mathbb{P}^3$ and a family of maximal linearly embedded projective spaces $\mathbb{P}^\infty$ passing through a fixed point $y\in Y$ is \begin{itemize}
\item $\Grass(F,E,V)$ where $\dim F=3$ or $\dim V/F=3$,
\item $\OGrass(F,E,V)$ where $\dim F=3$,
\item $\OGrass(F,E,V)$ where $\dim F^\perp/F\in\{6,7\}$,
\item $\SGrass(F,E,V)$ where $\dim F=3$,
\item $\SGrass(F,E,V)$ where $\dim F^\perp/F=2$.
\end{itemize}
However, in all these cases, the intersection of a maximal linearly embedded space $\mathbb{P}^3$ and a maximal linearly embedded ind-space $\mathbb{P}^\infty$ passing through the same point $y$ is isomorphic to $\mathbb{P}^1$ or is the point $y$ itself.
This proves our claim.
\end{proof}

\begin{lemma} \label{lemma-5.4new}
Let $X=\Flags(\mathcal{F},E,V)$, $X=\OFlags(\mathcal{F},E,V)$, or $X=\SFlags(\mathcal{F},E,V)$,
and let $\pi:X\to Y$ be an $\mathrm{Aut}\,X$-equivariant smooth surjective morphism, where $Y$ is another ind-variety of generalized flags. Then $Y$ is isomorphic respectively to $\Flags(\mathcal{F}',E,V)$, $\OFlags(\mathcal{F}',E,V)$, or $\SFlags(\mathcal{F}',E,V)$, where $\mathcal{F}'$ is a generalized subflag of $\mathcal{F}$.
\end{lemma}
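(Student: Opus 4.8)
The plan is to exploit that $X$ is a homogeneous ind-space and that $\pi$ is equivariant, in order to realize $Y$ as a homogeneous space for the \emph{same} ind-group, and then to invoke the classification of homogeneous ind-spaces from \cite{Dimitrov-Penkov}. Write $X=\mathbf{G}/\mathbf{P}$, where $\mathbf{G}$ is the relevant classical ind-group ($\GL(\infty)$, $\mathrm{SO}(\infty)$, or $\mathrm{Sp}(\infty)$) and $\mathbf{P}=\mathrm{Stab}_{\mathbf{G}}(\mathcal{F})$ is the splitting parabolic subgroup stabilizing $\mathcal{F}$. The ind-group $\mathbf{G}$ acts on $X$ by ind-variety automorphisms, hence maps to $\mathrm{Aut}\,X$; since $\pi$ is $\mathrm{Aut}\,X$-equivariant, it is in particular $\mathbf{G}$-equivariant for the induced $\mathbf{G}$-action on $Y$. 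As $\mathbf{G}$ acts transitively on $X$, the image $\pi(X)=\mathbf{G}\cdot\pi(x_0)$ is a single $\mathbf{G}$-orbit, and by surjectivity of $\pi$ it is all of $Y$. Thus $Y$ is a $\mathbf{G}$-homogeneous ind-space.

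Next I would identify the point stabilizer. Fix $x_0\in X$ with $\mathrm{Stab}_{\mathbf{G}}(x_0)=\mathbf{P}$ and set $y_0:=\pi(x_0)$, $\mathbf{Q}:=\mathrm{Stab}_{\mathbf{G}}(y_0)$; equivariance gives $\mathbf{P}\subseteq\mathbf{Q}$. Recall that $\mathbf{P}$, being a splitting parabolic, contains the splitting Cartan subgroup $\mathbf{H}$ of transformations diagonal in the basis $E$, and in fact contains a splitting Borel subgroup $\mathbf{B}$: it suffices to refine $\mathcal{F}$ to a maximal generalized flag compatible with $E$ and to take its stabilizer. Therefore $\mathbf{Q}\supseteq\mathbf{B}$, and by the structure theory of \cite{Dimitrov-Penkov} (the infinite-dimensional analogue of the finite-dimensional fact that a subgroup containing a Borel is parabolic) $\mathbf{Q}$ is itself a splitting parabolic subgroup, i.e. the stabilizer of a generalized flag $\mathcal{F}'$ compatible with $E$ (compatibility coming from $\mathbf{H}\subseteq\mathbf{Q}$). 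The inclusion $\mathbf{P}\subseteq\mathbf{Q}$ translates into $\mathcal{F}'$ being a subflag of $\mathcal{F}$, since fewer subspaces in a generalized flag correspond to a larger stabilizer. Because $\mathbf{G}$, $V$, and $E$ are unchanged, $Y$ is of the same type as $X$, so $Y$ is $\Flags(\mathcal{F}',E,V)$, $\OFlags(\mathcal{F}',E,V)$, or $\SFlags(\mathcal{F}',E,V)$ accordingly.

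Finally I would upgrade the bijective $\mathbf{G}$-orbit map to an isomorphism of ind-varieties, and this is where smoothness enters. The morphism $\pi$ factors as $X=\mathbf{G}/\mathbf{P}\stackrel{p}{\to}\mathbf{G}/\mathbf{Q}\stackrel{\bar\pi}{\to}Y$, where $p$ is the natural projection (a locally trivial fibration with fiber $\mathbf{Q}/\mathbf{P}$, hence a smooth surjection) and $\bar\pi$ is a $\mathbf{G}$-equivariant bijective morphism. Since $p$ is a smooth surjection and $\pi=\bar\pi\circ p$ is smooth, $\bar\pi$ descends to a smooth morphism; being bijective and equivariant between homogeneous ind-spaces it is an isomorphism. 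This identifies $Y$ with the flag ind-variety attached to the subflag $\mathcal{F}'$.

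The hard part will be the passage from set-theoretic homogeneity to the statement at the level of ind-variety structures: one must verify that the induced $\mathbf{G}$-action on $Y$ is genuinely by ind-variety automorphisms and that the orbit map $\mathbf{G}/\mathbf{Q}\to Y$ is an isomorphism rather than merely a bijective morphism, which is precisely what the smoothness hypothesis on $\pi$ is there to guarantee. A secondary point requiring care is to justify, at the ind-level, that a closed ind-subgroup of $\mathbf{G}$ containing a splitting Borel is a splitting parabolic corresponding to a subflag; here I rely on \cite{Dimitrov-Penkov}.
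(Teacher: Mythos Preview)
Your argument is essentially correct but follows a genuinely different route from the paper's. You work globally at the ind-level: realize $X=\mathbf{G}/\mathbf{P}$, push the $\mathbf{G}$-action forward along $\pi$ to make $Y$ a transitive $\mathbf{G}$-space, identify the stabilizer $\mathbf{Q}\supseteq\mathbf{P}\supseteq\mathbf{B}$ as a splitting parabolic via \cite{Dimitrov-Penkov}, and then argue that the resulting bijection $\mathbf{G}/\mathbf{Q}\to Y$ is an isomorphism using smoothness of $\pi$. The paper instead descends to finite dimensions: it fixes a standard exhaustion $X=\lim\limits_\to X_n$, sets $Y_n:=\pi(X_n)$, observes that every element of $\mathrm{Aut}^0 X_n$ extends to an element of $\mathrm{Aut}\,X$ so that each $\pi_n:X_n\to Y_n$ is $\mathrm{Aut}^0 X_n$-equivariant, and then invokes the classical finite-dimensional fact that an equivariant image of a flag variety is a shorter flag variety $X'_n$ of the same type; the induced embeddings $X'_n\hookrightarrow X'_{n+1}$ are again standard extensions and assemble to the subflag ind-variety. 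Your approach is cleaner conceptually but rests on the two technical points you yourself flag---that a closed ind-subgroup containing a splitting Borel is a splitting parabolic, and that a smooth bijective equivariant morphism of homogeneous ind-varieties is an isomorphism---neither of which is entirely trivial in the ind-setting. The paper's exhaustion approach trades this for a more hands-on argument, but buys the advantage that every delicate step (equivariant quotients, identifying the target as a flag variety) takes place in the finite-dimensional world where the needed facts are standard; smoothness of $\pi$ is used only implicitly, to ensure that the $\mathrm{Aut}^0 X_n$-action on $Y_n$ is algebraic and that $Y_n$ is a genuine projective variety.
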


\begin{proof}
Consider an exhaustion of $X$ by standard extensions
$$
\cdots \hookrightarrow X_n\hookrightarrow X_{n+1}\hookrightarrow \cdots,\ X=\lim_\to X_m
$$
and a corresponding exhaustion of $Y$
\begin{equation}
\label{Y}
\cdots \hookrightarrow Y_n:=\pi(X_n)\hookrightarrow Y_{n+1}:=\pi(X_{n+1})\hookrightarrow \cdots,\quad Y=\lim_\to Y_m.
\end{equation}
Since $X$ is a homogeneous space for the finitary ind-group $\lim\limits_\to\mathrm{Aut}^0X_n$, where $\mathrm{Aut}^0$ stands for connected component of unity (see also \cite{IP} for a description of $\mathrm{Aut}\,X$), any automorphism in $\mathrm{Aut}^0X_n$ extends to an automorphism of $X$. Therefore each projection $\pi_n:X_n\to Y_n$ is $\mathrm{Aut}^0X_n$-equivariant.
Through the theory of finite-dimensional flag varieties, this implies that $Y_n$ is isomorphic to a shorter flag variety $X'_n$ of same type as $X_n$ so that $\pi_n$ corresponds to the natural projection $X_n\to X'_n$.
The standard extensions $X_n\hookrightarrow X_{n+1}$ then induce an exhaustion by standard extensions
$$
\cdots \hookrightarrow X'_n\hookrightarrow X'_{n+1}\hookrightarrow \cdots,\quad X':=\lim_\to X'_m
$$
which commutes with the exhaustion (\ref{Y}) via the isomorphisms $Y_n\cong X'_n$. Therefore, $Y$ is isomorphic to the ind-variety of generalized flags $\lim\limits_\to X'_n$ of the form indicated in the statement.
\end{proof}

We can now prove Theorems \ref{T5.1} and \ref{T5.2}.

\begin{proof}[Proof of Theorem \ref{T5.2}]
First we suppose that $X=\Flags(\mathcal{F},E,V)$ and $Y=\SFlags(\mathcal{G},E',W)$ are isomorphic ind-varieties. By Lemma \ref{lemma-5.4new}, for every nonzero proper subspace $F\in \mathcal{F}$ there is an isotropic subspace $G\in\mathcal{G}$ such that $\Grass(F,E,V)\cong\SGrass(G,E',W)$, and for every nontrivial isotropic subspace $G'\in\mathcal{G}$ there is a subspace $F'\in\mathcal{F}$ such that $\SGrass(G',E',W)\cong\Grass(F',E,V)$. Since an isomorphism $\Grass(F,E,V)\cong\SGrass(G,E',W)$
can exist only if $\dim G=1$ and $\dim F=1$ or $\dim V/F=1$ (see Proposition \ref{P5.3}),
we infer that $\mathcal{G}$ must be of the form $\mathcal{G}=\{\{0\}\subset G\subset G^\perp\subset W\}$ with $\dim G=1$, while $\mathcal{F}$ could be only  of the form $\mathcal{F}=\{\{0\}\subset F\subset V\}$ with $\dim F=1$ or $\dim V/F=1$, or of the form $\mathcal{F}=\{\{0\}\subset F_1\subset F_2\subset V\}$ with $\dim F_1=\dim V/F_2=1$. However, the latter is not an option since we would then have $\Pic\,X=\mathbb{Z}^2\not\cong\mathbb{Z}=\Pic\,Y$. The conclusion of the theorem follows.

Next we suppose that $X=\Flags(\mathcal{F},E,V)$, or $X=\SFlags(\mathcal{F},E,V)$, and $Y=\OFlags(\mathcal{G},E',W)$ are isomorphic. Arguing as in the first case, it  suffices to note that an isotropic ind-grassmannian $\OGrass(G,E',W)$ is never isomorphic to an ind-grassmannian $\Grass(F,E,V)$ or $\SGrass(F,E,V)$. Thus the claim follows again from Proposition \ref{P5.3}.
\end{proof}




\begin{proof}[Proof of Theorem \ref{T5.1}]
Assume that two ind-varieties $X$ and $Y$ as in the statement of Theorem \ref{T5.1} are isomorphic. Without loss of generality we may suppose
that $X=\Flags(\mathcal{F},E,V)$, or  $X=\OFlags(\mathcal{F},E,V)$, or $X=\SFlags(\mathcal{F},E,V)$,
and $Y=\Flags(\mathcal{G},E',V)$, or $Y=\OFlags(\mathcal{G},E',V)$, or $Y=\SFlags(\mathcal{G},E',V)$, respectively.
Fix an isomorphism $\varphi:X\to Y$ with $\varphi(\mathcal{F})=\mathcal{G}$. In the case of ind-varieties of orthogonal generalized flags, we first assume that $\mathcal{F}$ and $\mathcal{G}$ do not contain isotropic subspaces $F$ with $\dim F^\perp/F\leq 2$. According to our convention from Section \ref{section-2}, this is equivalent to assuming that both $\mathcal{F}$ and $\mathcal{G}$ do not contain maximal isotropic subspaces.

The existence of the isomorphism $\varphi$ implies the existence of a commutative diagram
\begin{equation}
\label{diagram1}
\xymatrix{X_1 \ar@{^{(}->}[r]^{\psi_1} \ar@{^{(}->}[d]^{\varphi_1} & X_2 \ar@{^{(}->}[r]^{\psi_2} \ar@{^{(}->}[d]^{\varphi_2} & X_3 \ar@{^{(}->}[r]^{\psi_3} \ar@{^{(}->}[d]^{\varphi_3} & \cdots \\
Y_1  \ar@{^{(}->}[r]^{\psi'_1} \ar@{^{(}->}[ru]^{\xi_1} & Y_2 \ar@{^{(}->}[r]^{\psi'_2} \ar@{^{(}->}[ru]^{\xi_2} & Y_3 \ar@{^{(}->}[r]^{\psi'_3} \ar@{^{(}->}[ru]^{\xi_3} & \cdots }
\end{equation}
where all the maps are embeddings and the rows are exhaustions of $X$ and $Y$, respectively, by standard extensions.

We claim that $\varphi_i$ and $\xi_i$ are standard extensions for all $i\geq 1$. First we check that $\varphi_i$ and $\xi_i$ are linear. To do this, we analyse the maps induced on  Picard groups.
Since $\xi_i^*\circ\varphi_{i+1}^*=\psi'^*_i$ and $\psi'^*_i$ is surjective, it follows that $\xi_i^*$ is surjective. Letting $[M]$ be one of the preferred generators of $\Pic\,Y_i$, there is $a\in\Pic\,X_{i+1}$ with $\xi_i^*a=[M]$. Due to Lemma \ref{L-3.2}  we can choose $a=[L]$, where $[L]$ is one of the preferred generators of $\Pic\,X_{i+1}$. Then $\varphi_i^*[M]=\psi_i^*[L]$ should be equal to $0$ or be a preferred generator of $\Pic\,X_i$ because $\psi_i$ is linear. This shows that $\varphi_i$ is linear, and a similar argument works for $\xi_i$.
Next, since $\psi_i=\xi_i\circ\varphi_i$ and $\psi_i$ is a standard extension, we deduce from Theorem \ref{T-PT}  that $\varphi_i$ and $\xi_i$ do not factor through a direct product or a maximal quadric (in the orthogonal case). Now Theorem \ref{T-PT} implies that $\varphi_i$ and $\xi_i$ are standard extensions as long as all maps induced on grassmannians by $\varphi_i$ and $\xi_i$, see Proposition \ref{prop-linear}, are standard extensions. However, if some of these maps induced on grassmannians were not a standard extension, then by Theorem \ref{T-PT}\,(b) it would be necessarily a combination of isotropic and standard extensions. Now the commutativity of diagram (\ref{diagram1}), together with \cite[Lemma 3.9]{PT1}, implies that some map induced on grassmannians by $\varphi_i$ or $\xi_i$ would be a combination of isotropic and standard extensions. By \cite[Remark 3.10]{PT1}, this contradicts the assumption that $\psi_i$ are standard extensions. This establishes the claim that $\varphi_i$ and $\xi_i$ are standard extensions for all $i\geq 1$.

In the isotropic case, $\varphi_i$ and $\xi_i$ are strict standard extensions. Let us show that also for $X$ and $Y$ of general type we can reduce the problem to the case where $\varphi_i$ and $\xi_i$ are strict standard extensions. 
Indeed, note that there is also the diagram
\begin{equation}
\label{diagram2}
\xymatrix{
Y_1  \ar@{^{(}->}[r]^{\psi'_1} 
\ar[d]_\sim^{\delta_1} & Y_2 \ar@{^{(}->}[r]^{\psi'_2} 
\ar[d]_\sim^{\delta_2} & Y_3 \ar@{^{(}->}[r]^{\psi'_3} 
\ar[d]_\sim^{\delta_3} & \cdots \\
{Y_1}^\vee \ar[r]^{{\psi'_1}^\vee} & {Y_2}^\vee \ar@{^{(}->}[r]^{{\psi'_2}^\vee} & {Y_3}^\vee \ar@{^{(}->}[r]^{{\psi'_3}^\vee} & \cdots}
\end{equation}
where ${Y_i}^\vee$ and $\delta_i$ are as Definition \ref{D-linear}\,(b.2); the bottom line of the diagram forms an exhaustion of $\mathrm{Fl}(\mathcal{G}^\perp,E'^*,\langle E'^*\rangle)$ (see the notation in Section \ref{section-2.2}).
Up to composing the two diagrams (\ref{diagram1}) and (\ref{diagram2}), thus considering $\mathrm{Fl}(\mathcal{G}^\perp,E'^*,\langle E'^*\rangle)=\lim\limits_\to{Y_n}^\vee$ instead of $\mathrm{Fl}(\mathcal{G},E',V)$, we can assume that $\varphi_1$ is a strict standard extension. Then it follows from Lemma \ref{L-se}\,(b) that every map $\varphi_i$, $\xi_i$ is a strict standard extension.

Let now $V=\bigcup_{n\geq 1}V_n$ be an exhaustion of the vector space $V$ so that $X_n$ and $Y_n$ are varieties of flags in $V_n$.
By Definition \ref{D-linear}\,(b.1), the strict standard extensions in (\ref{diagram1}) are induced by a diagram of injective linear maps
\begin{equation}
\label{diagram3}
\xymatrix{V_1 \ar[r]^{\iota_1} \ar[d]^{\alpha_1} & V_2 \ar[r]^{\iota_2} \ar[d]^{\alpha_2} & V_3 \ar[r]^{\iota_3} \ar[d]^{\alpha_3} & \cdots \\
V_1  \ar[r]^{\iota'_1} \ar[ru]^{\beta_1} & V_2 \ar[r]^{\iota'_2} \ar[ru]^{\beta_2} & V_3 \ar[r]^{\iota'_3} \ar[ru]^{\beta_3} & \cdots }
\end{equation}
where the $\iota_n:V_n\to V_{n+1}$ are the inclusion maps, and the maps $\alpha_n$ are induced by the morphisms $\varphi_n$ and make the diagram commutative if one disregards the diagonal arrows. Moreover, by Lemma \ref{L-diagram},
the maps $\beta_n$ can be chosen so that the entire diagram (\ref{diagram3}) is commutative. The maps $\iota'_n$ are then simply the compositions $\iota'_n=\alpha_{n+1}\circ\beta_n$.
In the orthogonal and symplectic cases the maps $\beta_n$ are unique and $\iota'_n=\iota_n$ (see Remark \ref{remark-3.4}).

Therefore, this diagram induces a linear isomorphism $\alpha:V\to V'$ where $V'$ is the direct limit of the lower row of the diagram, such that the generalized flags $\alpha(\mathcal{F})$ in $V'$ and $\mathcal{G}$ in $V$ coincide as points of $Y$.
In the orthogonal and symplectic cases, we have simply $\alpha(\mathcal{F})=\mathcal{G}$, and hence $\mathcal{F}$ and $\mathcal{G}$ are isomorphic.

If $X$ and $Y$ are of general type, we need an additional argument.
Note that in this case we have $Y=\Flags(\alpha(\mathcal{F}),\alpha(E),V')$.
Since the generalized flags $\mathcal{F}$ and $\alpha(\mathcal{F})$ are isomorphic via the linear operator $\alpha$, it remains to show that the generalized flags $\alpha(\mathcal{F})$ and $\mathcal{G}$ are isomorphic. This follows from the easy observation that if a generalized flag $\mathcal{H}$ in $V$ and a generalized flag $\mathcal{H}'$ in $V'$ represent the same point in  $Y$, then $\mathcal{H}$ and $\mathcal{H}'$ are isomorphic.
Indeed, one builds inductively an isomorphism of direct systems 
\begin{equation*}
\xymatrix{V_1 \ar[r]^{\iota_1} \ar[d]^{\eta_1} & V_2 \ar[r]^{\iota_2} \ar[d]^{\eta_2} & V_3 \ar[r]^{\iota_3} \ar[d]^{\eta_3} & \cdots \\
V_1  \ar[r]^{\iota'_1}  & V_2 \ar[r]^{\iota'_2}  & V_3 \ar[r]^{\iota'_3}  & \cdots }
\end{equation*}
such that $\eta_i(\mathcal{H}\cap V_i)=\mathcal{H}'\cap V_i$. Then the direct limit map $\lim\limits_\to\eta_n$ provides an isomorphism between $\mathcal{H}$ and $\mathcal{H}'$.
This allows us to conclude that $\mathcal{F}$ and $\mathcal{G}$ are isomorphic generalized flags.

To finish the proof of the theorem, it remains to consider the orthogonal case where
$$
X=\OFlags(\mathcal{F},E,V)\quad\mbox{and}\quad Y=\OFlags(\mathcal{G},E',V),
$$
and where one of the generalized flags $\mathcal{F}$ or $\mathcal{G}$ contains a maximal isotropic subspace.
There is no loss of generality in assuming that $X\not=\OGrass(F,E,V)$ and $Y\not=\OGrass(G,E',V)$.

Consider first the case where $\mathcal{F}=\{\{0\}\subset F\subset\tilde{F}\subset F^\perp\subset V\}$ with $\dim \tilde{F}/F=1$ and $\tilde{F}^\perp=\tilde{F}$. Then $\Pic\,X\cong\mathbb{Z}^2$ and hence $\mathcal{G}=\{\{0\}\subset G\subset\tilde{G}\subseteq\tilde{G}^\perp\subset G^\perp\subset V\}$ with $\dim\tilde{G}^\perp/\tilde{G}\in\{0,1\}$. If $\mathcal{G}$ is not isomorphic to $\mathcal{F}$, then $\dim \tilde{G}/G\geq 2$ or $\dim \tilde{G}^\perp/\tilde{G}=1$. In both cases $Y$ admits a proper smooth surjection to $\OGrass(G,E',V)$, while the only orthogonal ind-grassmannian to which $X$ admits a proper smooth surjection is $\OGrass(\tilde{F},E,V)$ with $\tilde{F}^\perp=\tilde{F}$.
Since $\OGrass(G,E',V)$ is not isomorphic to $\OGrass(\tilde{F},E,V)$ by Proposition \ref{P5.3}, this case is settled.
 
Now we consider the case of arbitrary orthogonal generalized flags $\mathcal{F}$ and $\mathcal{G}$ containing respective maximal isotropic subspaces.
Define projections as follows:
\begin{itemize}
\item $\pi_X:X\to\hat{X}$ where
$\hat{X}:=\OFlags(\hat{\mathcal{F}},E,V)$ is the ind-variety of
generalized flags associated to
$\hat{\mathcal{F}}:=\mathcal{F}\setminus\{F,F^\perp:F\in\mathcal{F},\ \dim F^\perp/F\leq 2\}$,
\item $\pi_Y:Y\to\hat{Y}$ where
$\hat{Y}:=\OFlags(\hat{\mathcal{G}},E',V)$ is the ind-variety of
generalized flags associated to
$\hat{\mathcal{G}}:=\mathcal{G}\setminus\{G,G^\perp:G\in\mathcal{G},\ \dim G^\perp/G\leq 2\}$,
\end{itemize}
We can assume without loss of generality that $\hat{X}$ and $\hat{Y}$ are both proper ind-varieties of generalized flags (not points) because otherwise we land in the case already considered.
(The case of 
$X=\OFlags(\mathcal{F},E,V)$ for $\mathcal{F}=\{\{0\}\subset F\subset\tilde{F}\subset F^\perp\subset V\}$ where $\dim \tilde{F}/F=1$ and $\tilde{F}^\perp=\tilde{F}$,
and $Y=\OGrass(\mathcal{G},E',V)$ for $\mathcal{G}=\{\{0\}\subset G\subset G^\perp\subset V\}$ with $\dim \tilde{G}^\perp/\tilde{G}= 1$ is ruled out by the existence of the isomorphism $\varphi$ because $\Pic\,X\cong\mathbb{Z}^2$ and $\Pic\,Y\cong\mathbb{Z}$.)

By Lemma \ref{lemma-5.4new} the isomorphism $\varphi$ induces an isomorphism $\hat\varphi:\hat{X}\to \hat{Y}$
with $\hat\varphi(\hat{\mathcal{F}})=\hat{\mathcal{G}}$.
Now the first part of the proof allows us to conclude that there exists an automorphism 
$$
\mbox{$\hat\varphi_V:V\to V$, preserving $\omega$ and such that $\hat{\mathcal{G}}=\hat\varphi_V(\hat{\mathcal{F}})$}.
$$
We claim that $\hat\varphi_V(\mathcal{F})=\mathcal{G}$, implying that the isotropic generalized flags $\mathcal{F}$ and $\mathcal{G}$ are isomorphic. Indeed, the maximal isotropic space $\tilde{F}\in\mathcal{F}$ is the union of all subspaces $F''\subsetneq\tilde{F}$ with the property that $F''$ belongs to some point $\mathcal{F}''\in X$ and has codimension $2$ or more in $\tilde{F}$. A similar statement applies to the maximal isotropic space $\tilde{G}\in\mathcal{G}=\varphi(\mathcal{F})$. Therefore,  $\tilde{G}$ equals the union of the spaces $\hat\varphi_V(F'')$ and hence coincides with $\hat\varphi_V(\tilde{F})$.
The same argument applies to spaces $F\in\mathcal{F}$ of codimension 1 in $\tilde{F}$ and $G\in\mathcal{G}$ of codimension 1 in $\tilde{G}$, if they exist, i.e., $\hat\varphi_V(F)=G$. Therefore, $\hat\varphi_V(\mathcal{F})=\mathcal{G}$.
\end{proof}

\begin{remark}
Note that the appearance of the space $V'$ in the proof of Theorem \ref{T5.1} correlates well with the results in \cite{IP} on automorphism groups of ind-varieties of generalized flags. It is essential that $V'$ does not have to coincide with $V$. Indeed, let $X=\Grass(F,E,V)=Y$ for $\dim V/F=1$. Then $\mathrm{Aut}\,X=\mathrm{PGL}(\langle E^*\rangle)$ by \cite{IP}. Consider the automorphism $\varphi:X\to X$ defined with respect to the basis $E^*$ by the infinite $\mathbb{Z}_{>0}\times\mathbb{Z}_{>0}$ matrix with entries equal to $1$ in the entire first row and the diagonal, and equal to $0$ elsewhere. Then one can compute that  $V'=\langle e_1-\sum_{i\geq 2}e_i,e_2,e_3,\ldots \rangle$
where the vectors in $V$ are interpreted as linear functions on $\langle E^*\rangle$. 
\end{remark}




\end{document}